\newtheorem{lem}{Lemma}
\newtheorem{theorem}{Theorem}
\newtheorem{assump}{Assumption}
\newcommand{\vast}{\bBigg@{4}}
\newcommand{\Vast}{\bBigg@{5}}
\def\a{\alpha}
\def\b{\beta}
\def\mbb{\mathbb}%R
\def\mb{\mathbf}%vector
\def\mc{\mathcal}%set
\def\wt{\widetilde}
\def\bs{\boldsymbol}
\def\ol{\overline}
\def\bds{\boldsymbol}
\newcommand{\mn}[1]{{\left\vert\kern-0.25ex\left\vert\kern-0.25ex\left\vert\kern0.3ex #1 
		\kern0.3ex\right\vert\kern-0.25ex\right\vert\kern-0.25ex\right\vert}}
\def\bpi{\boldsymbol{\pi}}
\def\tsum{{\textstyle\sum}}
\begin{document}
\title{\bf\LARGE Distributed heavy-ball: A generalization and acceleration of first-order methods with gradient tracking}
\author{Ran Xin,~\emph{Student Member,~IEEE}, and Usman A. Khan,~\emph{Senior Member,~IEEE}
\thanks{
The authors are with the ECE Department at Tufts University, Medford, MA; {\texttt{ran.xin@tufts.edu, khan@ece.tufts.edu}}. This work has been partially supported by an NSF Career Award \# CCF-1350264.}
}
	
\maketitle
\begin{abstract}
We study distributed optimization to minimize a global objective that is a sum of smooth and strongly-convex local cost functions. Recently, several algorithms over undirected and directed graphs have been proposed that use a gradient tracking method to achieve linear convergence to the global minimizer. However, a connection between these different approaches has been unclear. In this paper, we first show that many of the existing first-order algorithms are in fact related with a simple state transformation, at the heart of which lies the~$\mc{AB}$ algorithm. We then describe \textit{distributed heavy-ball}, denoted as~$\mc{AB}m$, i.e.,~$\mc{AB}$ with momentum, that combines gradient tracking with a momentum term and uses nonidentical local step-sizes. By~simultaneously implementing both row- and column-stochastic weights,~$\mc{AB}m$ removes the conservatism in the related work due to doubly-stochastic weights or eigenvector estimation.~$\mc{AB}m$ thus naturally leads to optimization and average-consensus over both undirected and directed graphs, casting a unifying framework over several well-known consensus algorithms over arbitrary strongly-connected graphs. We show that~$\mathcal{AB}m$ has a global $R$-linear rate when the largest step-size is positive and sufficiently small. Following the standard practice in the heavy-ball literature, we numerically show that~$\mc{AB}m$ achieves accelerated convergence especially when the objective function is ill-conditioned.
\end{abstract}

\vspace{-0.1cm}
\begin{IEEEkeywords}
Distributed optimization, linear convergence, first-order method, heavy ball method, momentum.
\end{IEEEkeywords}

\vspace{-0.1cm}
\section{Introduction}\label{s1}
We consider distributed optimization, where~$n$ agents collaboratively solve the following problem:  
\begin{equation*}
\min_{\mb{x}\in\mathbb{R}^n}F(\mb{x}) \triangleq \frac{1}{n}\sum_{i=1}^{n}f_i(\mb{x}),
\end{equation*}
and each local objective,~$f_i:\mathbb{R}^p\rightarrow\mathbb{R}$, is smooth and strongly-convex. The goal of the agents is to find the global minimizer of the aggregate cost via only local communication with their neighbors. This formulation has recently received great interest with applications in e.g., machine learning~\cite{forero2010consensus,distributed_Boyd,raja2016cloud,wai2018multi}, control~\cite{jadbabaie2003coordination}, cognitive networks,~\cite{distributed_Mateos,distributed_Bazerque}, and source localization~\cite{distributed_Rabbit,safavi2018distributed}. 

Early work on this topic builds on the seminal work by Tsitsiklis in~\cite{DOPT1} and includes Distributed Gradient Descent (DGD)~\cite{uc_Nedic} and distributed dual averaging~\cite{duchi2012dual} over undirected graphs. Leveraging push-sum consensus~\cite{ac_directed0}, Refs.~\cite{opdirect_Tsianous,opdirect_Nedic} extend the DGD framework to directed graphs. Based on a similar concept, Refs.~\cite{D-DGD,D-DPS} propose Directed-Distributed Gradient Descent (D-DGD) for directed graphs that is based on surplus consensus~\cite{ac_Cai1}. In general, the DGD-based methods achieve sublinear convergence at~$\mathcal{O}\left(\frac{\log k}{\sqrt{k}}\right)$, where~$k$ is the number of iterations, because of the diminishing step-size used in the iterations. The convergence rate of DGD can be improved with the help of a constant step-size but at the expense of an inexact solution~\cite{DGD_Yuan,balancing}. Follow-up work also includes augmented Lagrangians~\cite{ADMM_Wei,ADMM_Mota,ADMM_Shi,ESOM}, which shows exact linear convergence for smooth and strongly-convex functions, albeit requiring higher computation at each iteration. 

To improve convergence and retain computational simplicity, fast first-order methods that do not (explicitly) use a dual update have been proposed. Reference~\cite{DNC} describes a distributed Nesterov-type method based on multiple consensus inner loops, at~$\mathcal{O}\left(\frac{\log k}{k^2}\right)$ for smooth and convex functions, with bounded gradients. EXTRA~\cite{EXTRA} uses the difference of two consecutive DGD iterates to achieve an~$\mathcal{O}\left(\frac{1}{k}\right)$ rate for arbitrary convex functions and a $Q$-linear rate for strongly-convex functions. DEXTRA~\cite{DEXTRA} combines push-sum~\cite{ac_directed0} and EXTRA~\cite{EXTRA} to achieve an $R$-linear rate over directed graphs given that a constant step-size is carefully chosen in some interval. Refs.~\cite{exactdiffusion1,exactdiffusion2} apply an adapt-then-combine structure~\cite{diffusion} to EXTRA~\cite{EXTRA} and generalize the symmetric weights in EXTRA to row-stochastic, over undirected graphs. 

Noting that DGD-type methods are faster with a constant step-size, recent work~\cite{AugDGM,harness,add-opt,diging,linear_row,FROST,AB,dnesterov,jakovetic2018unification,SUCAG} uses a constant step-size and replaces the local gradient, at each agent in DGD, with an estimate of the global gradient. A method based on gradient tracking was first shown in~\cite{AugDGM} over undirected graphs, which proposes Aug-DGM (that uses nonidentical step-sizes at the agents) with the help of dynamic consensus~\cite{DAC} and shows convergence for smooth convex functions. When the step-sizes are identical, the convergence rate of Aug-DGM was derived to be~$\mathcal{O}\left(\frac{1}{k}\right)$ for arbitrary convex functions and~$R$-linear for strongly-convex functions in~\cite{harness}. ADD-OPT~\cite{add-opt} extends~\cite{harness} to directed graphs by combining push-sum with gradient tracking and derives a contraction in an arbitrary norm to establish an~$R$-linear convergence rate when the global objective is smooth and strongly-convex. Ref.~\cite{diging} extends the analysis in~\cite{harness,add-opt} to time-varying graphs and establishes an~$R$-linear convergence using the small gain theorem~\cite{control}. In contrast to the aforementioned methods~\cite{AugDGM,harness,add-opt,diging}, where the weights are doubly-stochastic for undirected graphs and column-stochastic for directed graphs, FROST~\cite{linear_row,FROST} uses row-stochastic weights, which have certain advantages over column-stochastic weights. Ref.~\cite{jakovetic2018unification} unifies EXTRA~\cite{EXTRA} and gradient tracking methods~\cite{AugDGM,harness} in a primal-dual framework over static undirected graphs. More recently, Ref.~\cite{dnesterov} proposes distributed Nesterov over undirected graphs that also uses gradient tracking and shows a convergence rate of~$\mathcal{O}((1-{c}{\mathcal{Q}^{-\frac{5}{7}}})^k)$ for smooth, strongly-convex functions, where~$\mathcal{Q}$ is the condition number of the global objective.  Refs.~\cite{NEXT,sonata}, on the other hand, consider gradient tracking in distributed non-convex problems, while Ref.~\cite{SUCAG} uses second-order information to accelerate the convergence. 

Of significant relevance here is the~$\mathcal{AB}$ algorithm~\cite{AB},~also appeared later in~\cite{the_copy_work_2}, which can be viewed as a generalization of distributed first-order methods with gradient tracking. In particular, the algorithms over undirected graphs~in~Refs.~\cite{AugDGM,harness} are a special case of~$\mc{AB}$ because the doubly-stochastic weights therein are replaced by row- and column- stochastic weights. $\mathcal{AB}$ thus is naturally applicable to arbitrary directed graphs. Moreover, the use of both row- and column-stochastic weights removes the need for eigenvector estimation\footnote{Simultaneous application of both row- and column-stochastic weights was first employed  for average-consensus in~\cite{ac_Cai1} and towards distributed optimization in~\cite{D-DPS,D-DGD}, albeit without gradient tracking.}, required earlier in~\cite{add-opt,diging,linear_row,FROST}. Ref.~\cite{AB} derives an $R$-linear rate for~$\mathcal{AB}$ when the objective functions are smooth and strongly-convex. In this paper, we provide an improved understanding of~$\mc{AB}$ and extend it to the~$\mc{AB}m$ algorithm, a \textit{distributed heavy-ball method}, applicable to both undirected and directed graphs. We now summarize the main contributions:
\vspace{-0.05cm}
\begin{enumerate}
	\item We show that many of the existing accelerated first-order methods are either a special case of~$\mc{AB}$~\cite{AugDGM,harness}, or can be adapted from its equivalent forms~\cite{diging,add-opt,linear_row,FROST}.
	\item We propose a distributed heavy-ball method, termed as~$\mc{AB}m$, that combines~$\mc{AB}$ with a heavy-ball (type) momentum term. To the best of our knowledge, this paper is the first to use a momentum term based on the heavy-ball method in distributed optimization.
	\item $\mc{AB}m$ employs nonidentical step-sizes at the agents and thus its analysis naturally carries to nonidentical step-sizes in~$\mc{AB}$ and to the related algorithms in~\cite{AugDGM,harness,diging,add-opt,linear_row,FROST}.  
	\item We cast a unifying framework for consensus over arbitrary graphs that results from~$\mc{AB}m$ and subsumes several well-known algorithms~\cite{ac_Cai1,ac_row}. 
\end{enumerate}
\vspace{-0.05cm}
On the analysis front, we show that~$\mc{AB}$ (without momentum) converges faster as compared to the algorithms over directed graphs in~\cite{add-opt,diging,linear_row,FROST}, where separate iterations for eigenvector estimation are applied nonlinearly to the underlying algorithm. Towards~$\mc{AB}m$, we establish a \textit{global} $R$-linear convergence rate for smooth and strongly-convex objective functions when the largest step-size at the agents is positive and sufficiently small. This is in contrast to the earlier work on non-identical step-sizes within the framework of gradient tracking~\cite{AugDGM,digingun,digingstochastic,lu2018geometrical}, which requires the heterogeneity among the step-sizes to be sufficiently small, i.e., the step-sizes are close to each other. We also acknowledge that similar to the centralized heavy-ball method~\cite{polyak1964some,polyak1987introduction}, dating back to more than~50 years, and the recent work~\cite{ghadimi2015global,IAGM,lessard2016analysis,drori2014performance,polyak2017lyapunov,IGM,sHB}, a \textit{global} acceleration can only be shown via numerical simulations. Following the standard practice, we provide simulations to verify that~$\mathcal{AB}m$ has accelerated convergence, the effect of which is more pronounced when the global objective function is ill-conditioned.

We now describe the rest of the paper. Section~\ref{s2} provides preliminaries, problem formulation, and introduces distributed heavy-ball, i.e., the~$\mc{AB}m$ algorithm. Section~\ref{s3} establishes the connection between~$\mc{AB}$ and related algorithms. Section~\ref{s4} includes the main results on the convergence analysis, whereas Section~\ref{s6} provides a family of average-consensus algorithms that result naturally from~$\mc{AB}m$. Finally, Section~\ref{s7} provides numerical experiments and Section~\ref{s8} concludes the paper.

\textbf{Basic Notation:} We use lowercase bold letters to denote vectors and uppercase letters for matrices. The matrix,~$I_n$, is the~$n\times n$ identity, whereas~$\mb{1}_n$ ($\mb{0}_n$) is the~$n$-dimensional column vector of all ones (zeros). For an arbitrary vector,~$\mb{x}$, we denote its~$i$th element  by~$[\mb{x}]_i$ and its largest and smallest element by~$[\mb{x}]_{\max}$ and~$[\mb{x}]_{\min}$, respectively. We use~$\mbox{diag}(\mb{x})$ to denote a diagonal matrix that has~$\mb{x}$ on its main diagonal. For two matrices,~$X$ and~$Y$,~$\mbox{diag}\left(X,Y\right)$ is a block-diagonal matrix with~$X$ and~$Y$ on its main diagonal, and~$X\otimes Y$ denotes their Kronecker product. The spectral radius of a matrix,~$X$, is represented by~$\rho(X)$. For a primitive, row-stochastic matrix,~$A$, we denote its left and right eigenvectors corresponding to the eigenvalue of~$1$ by~$\bs{\pi}_r$ and~$\mb{1}_n$, respectively, such that~$\bs{\pi}_r^\top\mb{1}_n = 1$; similarly, for a primitive, column-stochastic matrix,~$B$, we denote its left and right eigenvectors corresponding to the eigenvalue of~$1$ by~$\mb{1}_n$ and~$\bs{\pi}_c$, respectively, such that~$\mb{1}_n^\top\bs{\pi}_c = 1$. For a matrix~$X$, we denote~$X_\infty$ as its infinite power (if it exists), i.e.,~$X_\infty =\lim_{k\rightarrow\infty}X^k.$ From the Perron-Frobenius theorem~\cite{matrix}, we have~$A_\infty=\mb{1}_n\bs{\pi}_r^\top$ and~$B_\infty=\bs{\pi}_c\mb{1}_n^\top$. We denote~$\left\|\cdot\right\|_\mathcal{A}$ and~$\left\|\cdot\right\|_\mathcal{B}$ as some arbitrary vector norms, the choice of which will be clear in Lemma~\ref{contra}, while~$\left\|\cdot\right\|$ denotes the Euclidean matrix and vector norms. 

\section{Preliminaries and Problem Formulation}\label{s2}
Consider~$n$ agents connected over a directed graph,~$\mc{G}=(\mc{V},\mc{E})$, where~$\mc{V}=\{1,\cdots,n\}$ is the set of agents, and~$\mc{E}$ is the collection of ordered pairs,~$(i,j),i,j\in\mc{V}$, such that agent~$j$ can send information to agent~$i$, i.e.,~$j\rightarrow i$. We define~$\mc{N}_i^{{\scriptsize \mbox{in}}}$ as the collection of in-neighbors of agent~$i$, i.e., the set of agents that can send information to agent~$i$. Similarly,~$\mc{N}_i^{{\scriptsize \mbox{out}}}$ is the set of out-neighbors of agent~$i$. Note that both~$\mc{N}_i^{{\scriptsize \mbox{in}}}$ and~$\mc{N}_i^{{\scriptsize \mbox{out}}}$ include agent~$i$. The agents solve the following  problem:
\begin{align}
\mbox{P1}:
\quad\min_{\mb{x}\in\mathbb{R}^n}F(\mb{x})\triangleq\frac{1}{n}\sum_{i=1}^nf_i(\mb{x}),\nonumber
\end{align}
where each~$f_i:\mbb{R}^p\rightarrow\mbb{R}$ is known only to agent~$i$. We formalize the set of assumptions as follows.
\begin{assump}\label{asp1}
	The  graph,~$\mc{G}$, is strongly-connected.
\end{assump}

\begin{assump}\label{asp2}
	Each local objective,~$f_i$, is~$\mu_i$-strongly-convex, i.e.,~$\forall i\in\mc{V}$ and~$\forall\mb{x}, \mb{y}\in\mbb{R}^p$, we have
	\begin{equation*}
	f_i(\mb{y})\geq f_i(\mb{x})+\nabla f_i(\mb{x})^\top(\mb{y}-\mb{x})+\frac{\mu_i}{2}\|\mb{x}-\mb{y}\|^2,
	\end{equation*}
	where~$\mu_i\geq0$ and~$\sum_{i=1}^{n}\mu_i>0$.
\end{assump} 

\begin{assump}\label{asp3}
Each local objective,~$f_i$, is~$l_i$-smooth, i.e., its gradient is Lipschitz-continuous:~$\forall i\in\mc{V}$ and~$\forall\mb{x}, \mb{y}\in\mbb{R}^p$, we have, for some~$l_i>0$,
	\begin{equation*}
	\qquad\|\mb{\nabla} f_i(\mb{x})-\mb{\nabla} f_i(\mb{y})\|\leq l_i\|\mb{x}-\mb{y}\|.
	\end{equation*}
\end{assump}
Assumptions~\ref{asp2} and~\ref{asp3} ensure that the global minimizer,~$\mb{x}^*\in\mbb{R}^p$, of~$F$ exists and is unique~\cite{nesterov2013introductory}. In the subsequent analysis, we use~$\mu \triangleq \frac{1}{n}\sum_{i=1}^{n}\mu_i$ and~$l \triangleq \frac{1}{n}\sum_{i=1}^{n} l_i$, as the strong-convexity and Lipschitz-continuity constants, respectively, for the global objective,~$F$. We define~$\ol{l}\triangleq\max_il_i$. We next describe the heavy-ball method that is credited to Polyak and then introduce the distributed heavy-ball method, termed as the~$\mc{AB}m$ algorithm, to solve Problem P1.

\subsection{Heavy-ball method}\label{hbm}
It is well known~\cite{polyak1987introduction,nesterov2013introductory} that the best achievable convergence rate of the gradient descent algorithm,
\begin{equation*}
\mb{x}_{k+1} = \mb{x}_{k} - \alpha\nabla F\left(\mb{x}_{k}\right), 
\end{equation*}
is~$\mc{O}((\tfrac{\mc{Q}-1}{\mc{Q}+1})^k)$, where~$\mc{Q}\triangleq\tfrac{l}{\mu}$ is the condition number of the objective function,~$F$. Clearly, gradient descent is quite slow when~$\mc{Q}$ is large, i.e., when the objective function is ill-conditioned. The seminal work by Polyak~\cite{polyak1964some,polyak1987introduction} proposes the following heavy-ball method:
\begin{equation}\label{HB}
\mb{x}_{k+1} = \mb{x}_{k} - \alpha\nabla F(\mb{x}_{k}) + \beta(\mb{x}_{k}-\mb{x}_{k-1}),
\end{equation}
where~$\beta\left(\mb{x}_{k}-\mb{x}_{k-1}\right)$ is interpreted as a ``momentum'' term, used to accelerate the convergence process. Polyak shows that with a specific choice of~$\alpha$ and~$\beta$, the heavy-ball method achieves a \emph{local} accelerated rate of~$\mc{O}((\tfrac{\mc{\sqrt{Q}}-1}{\mc{\sqrt{Q}}+1})^k)$. By local, it is meant that the acceleration can only be analytically shown when~$\|\mb{x}_0-\mb{x}^*\|$ is sufficiently small. Globally, i.e., for arbitrary initial conditions, only linear convergence is established, while an analytical characterization of the acceleration is still an open problem, see related work in~\cite{ghadimi2015global,IGM,IAGM,sHB,polyak2017lyapunov}. Numerical analysis and simulations are often employed to show global acceleration, i.e., it is possible to tune~$\alpha$ and~$\beta$ such that the heavy-ball method is faster than gradient descent~\cite{drori2014performance,lessard2016analysis}. 

\subsection{Distributed heavy-ball: The~$\mathcal{AB}m$ algorithm}
Recall, that our goal is to solve Problem P1 when the agents, possessing only local objectives, exchange information over a strongly-connected directed graph,~$\mc{G}$. Each agent,~$i\in\mc{V}$, maintains two variables:~$\mb{x}^{i}_k$,~$\mb{y}^{i}_k\in\mbb{R}^p$, where~$\mb{x}^i_k$ is the local estimate of the global minimizer and~$\mb{y}^i_k$ is an auxiliary variable. The~$\mc{AB}m$ algorithm, initialized with arbitrary~$\mb{x}^i_0$'s, $\mb{x}_{-1}^i=\mb 0_p$ and~$\mb{y}^i_0=\nabla f_i(\mb{x}^i_0),\forall i\in\mc{V}$, is given by\footnote{We note that several variants of this algorithm can be extracted by considering an adapt-then-combine update, e.g.,~$\sum_{j=1}^{n}b_{ij}(\mb{y}^j_k+\nabla  f_i(\mb{x}^i_{k+1}\big)-\nabla f_i\big(\mb{x}^i_k))$, see~\cite{AB}, instead of the combine-then-adapt update that we have used here in Eq.~\eqref{ABb}. The momentum term in Eq.~\eqref{ABa} can also be integrated similarly. We choose one of the applicable forms and note that extensions to other cases follow from this exposition and the subsequent analysis.}:
\begin{subequations}\label{AB}
	\begin{align}
	\mb{x}^i_{k+1}=&\sum_{j=1}^{n}a_{ij}\mb{x}^{j}_k-\alpha_i\mb{y}^i_k
	+ \beta_i \left( \mb{x}_k^i - \mb{x}_{k-1}^i \right), 
	\label{ABa}
	\\
	\mb{y}^i_{k+1}=&\sum_{j=1}^{n}b_{ij}\mb{y}^j_k+\nabla \label{ABb} f_i\big(\mb{x}^i_{k+1}\big)-\nabla f_i\big(\mb{x}^i_k\big),
	\end{align}
\end{subequations}
where~$\alpha_i\geq 0$ and~$\beta_i\geq 0$ are respectively the local step-size and the momentum parameter adopted by agent~$i$.
The weights,~$a_{ij}$'s and~$b_{ij}$'s, are associated with the graph topology and satisfy the following conditions:
\begin{align*} 
a_{ij}&=\left\{
\begin{array}{rl}
>0,&j\in\mc{N}_i^{{\scriptsize \mbox{in}}},\\
0,&\mbox{otherwise},
\end{array}
\right.
\quad
\sum_{j=1}^na_{ij}=1,\forall i\begin{color}{black},\end{color} 
\end{align*} 
\begin{align*} 
b_{ij}&=\left\{
\begin{array}{rl}
>0,&i\in\mc{N}_j^{{\scriptsize \mbox{out}}},\\
0,&\mbox{otherwise},
\end{array}
\right.
\quad
\sum_{i=1}^nb_{ij}=1,\forall j. 
\end{align*}
Note that the weight matrix,~$A=\{a_{ij}\}$, in Eq.~\eqref{ABa} is~RS (row-stochastic) and the weight matrix,~$B=\{b_{ij}\}$ in Eq.~\eqref{ABb} is CS (column-stochastic), both of which can be implemented over undirected and directed graphs alike. Intuitively, Eq.~\eqref{ABb} tracks the average of local gradients,~$\frac{1}{n}\sum_{i=1}^{n}\nabla f_i(\mb{x}^i_k)$, see~\cite{AugDGM,harness,add-opt,diging,linear_row,FROST,AB,dnesterov,jakovetic2018unification,DAC}, and therefore Eq.~\eqref{ABa} asymptotically approaches the centralized heavy-ball, Eq.~\eqref{HB}, as the descent direction~$\mb{y}^i_k$ becomes the gradient of the global objective.

\textbf{Vector form}: For the sake of analysis, we now write~$\mc{AB}m$ in vector form. We use the following notation:
\begin{eqnarray*}
\mb{x}_k\triangleq
\left[
\begin{array}{c}
\mb{x}_{k}^1\\
\vdots\\
\mb{x}_{k}^n
\end{array}
\right],~\mb{y}_k\triangleq
\left[
\begin{array}{c}
\mb{y}_{k}^1\\
\vdots\\
\mb{y}_{k}^n
\end{array}
\right],~
\nabla\mb{f}(\mb{x}_k)\triangleq
\left[
\begin{array}{c}
\nabla f_1(\mb{x}_{k}^1)\\
\vdots\\
\nabla f_n(\mb{x}_{k}^n)
\end{array}
\right],
\end{eqnarray*}
all in~$\mathbb{R}^{np}$. Let~$\bs\a$ and~$\bs\b$ define the vectors of the step-sizes and the momentum parameters, respectively. We now define augmented weight matrices,~$\mc{A},\mc{B}$, and augmented step-size and momentum matrices,~$D_{\bds{\alpha}},D_{\bds{\beta}}$: 
\begin{align*}
\mc{A} &\triangleq A \otimes I_p,\qquad  D_{\bds{\alpha}}\triangleq \mbox{diag}(\bs\a)\otimes I_p,\\
\mc{B} &\triangleq B \otimes I_p,\qquad D_{\bds{\beta}}\triangleq\mbox{diag}(\bs\b)\otimes I_p, 
\end{align*} 
all in~$\mathbb{R}^{np\times np}$. Using the notation above,~$\mc{AB}m$ can be compactly written as:
\begin{subequations}\label{ABmv}
\begin{align}
\mb{x}_{k+1} &= \mc{A}\mb{x}_k - D_{\bds{\alpha}}\mb{y}_k + D_{\bds{\beta}}
\left(\mb{x}_k-\mb{x}_{k-1}\right)
, \label{ABmva}\\
\mb{y}_{k+1} &= \mc{B}\mb{y}_k + \nabla \mb{f}(\mb{x}_{k+1})-\nabla \mb{f}(\mb{x}_k), \label{ABmvb}.
\end{align}
\end{subequations}
We note here that when~$\beta_i=0,\forall i$,~$\mc{AB}m$ reduces to~$\mc{AB}$~\cite{AB}, albeit with two distinguishing features: (i) the algorithm in~\cite{AB} uses an identical step-size,~$\alpha$, at each agent; and (ii) Eq.~\eqref{ABb} in~\cite{AB} is in an adapt-then-combine form.

\section{Connection with existing first-order methods}\label{s3}
In this section, we provide a generalization of several existing methods that employ gradient tracking~\cite{AugDGM,harness,add-opt,diging,linear_row,FROST} and show that~$\mc{AB}$ lies at the heart of these approaches. To proceed, we rewrite the~$\mc{AB}$ updates below (without momentum)~\cite{AB}. 
\begin{subequations}\label{ABv}
\begin{align}
\mb{x}_{k+1} &= \mc{A}\mb{x}_k - \alpha\mb{y}_k, \label{ABva}\\
\mb{y}_{k+1} &= \mc{B}\mb{y}_k + \nabla \mb{f}(\mb{x}_{k+1})-\nabla \mb{f}(\mb{x}_k) \label{ABvb}.
\end{align}
\end{subequations}
Since~$\mc{AB}$ uses both RS and CS weights simultaneously, it is natural to ask how are the optimization algorithms that require the weight matrices to be doubly-stochastic (DS)~\cite{EXTRA,AugDGM,harness,diging}, or only CS~\cite{add-opt,diging}, or only RS~\cite{linear_row,FROST}, are related to each other. We discuss this relationship next. 

\noindent \textbf{Optimization with DS weights}: Refs.~\cite{AugDGM,harness,diging} consider the following updates, termed as Aug-DGM in~\cite{AugDGM} and DIGing in~\cite{diging}:
\begin{subequations}\label{harness}
	\begin{align}
	\mb{x}_{k+1} &= \mc{W}\mb{x}_k -\alpha\mb{y}_k, \label{harness_a}\\
	\mb{y}_{k+1} &= \mc{W}\mb{y}_k + \nabla \mb{f}(\mb{x}_{k+1})-\nabla \mb{f}(\mb{x}_k), \label{harness_b}
	\end{align}
\end{subequations}
where~$\mc{W} = W \otimes I_p$, and~$W$ is a DS weight matrix. Clearly, to obtain DS weights, the underlying graph must be undirected (or balanced) and thus the algorithm in Eqs.~\eqref{harness} is not applicable to arbitrary directed graphs. That~$\mc{AB}$ generalizes Eqs.~\eqref{harness} is straightforward as the DS weights naturally satisfy the RS requirement in the top update and the CS requirement in the bottom update, while the reverse is not true. Similarly, we note that a related algorithm, EXTRA~\cite{EXTRA}, is given by
\begin{equation*}
\mb{x}_{k+1} = (I+\mc{W})\mb{x}_{k} - \wt{\mc{W}}\mb{x}_{k-1} - \alpha\left(\nabla\mb{f}(\mb{x}_k)-\nabla\mb{f}(\mb{x}_{k-1})\right),
\end{equation*}
where the two weight matrices,~$\mc{W}$ and~$\wt{\mc{W}}$, must be symmetric and satisfy some other stringent requirements, see~\cite{EXTRA} for details. Eliminating the~$\mb{y}_k$-update in~$\mc{AB}$, we note that~$\mc{AB}$ can be written in the EXTRA format as follows:
\begin{align}\nonumber
\mb{x}_{k+1} =& \left(I+(\mc{A+B}-I)\right)\mb{x}_k \\\label{ABmEXTRA}
&- \left(\mc{B A}\right)\mb{x}_{k-1} 
- \alpha \left(\nabla \mb{f}(\mb{x}_{k})-\nabla \mb{f}(\mb{x}_{k-1})\right).
\end{align}
It can be seen that the linear convergence of~$\mc{AB}$ does not follow from the analysis in~\cite{EXTRA} as~$\mc{A+B}-I$ and~$\mc{BA}$ are not necessarily symmetric. Analysis of the~$\mc{AB}$ algorithm, therefore, generalizes that of EXTRA to non-doubly-stochastic and non-symmetric weight matrices.

\noindent \textbf{Optimization with CS weights}: We now relate~$\mc{AB}$ to ADD-OPT/Push-DIGing that only require CS weights~\cite{add-opt,diging}. Since~$\mc{B}$ is already CS in~$\mc{AB}$, it suffices to seek a state transformation that transforms~$\mc{A}$ from RS to CS, while respecting the graph topology. To this aim, let us consider the following transformation on the~$\mb{x}_k$-update in~$\mc{AB}$: $\wt{\mb{x}}_k\triangleq\Pi_r\mb{x}_k,$ where~$\Pi_r \triangleq \mbox{diag}(n\bpi_r)\otimes I_p$ and~$\bpi_r$ is the left-eigenvector of the RS weight matrix,~$A$, corresponding to the eigenvalue~$1$. The resulting transformed~$\mc{AB}$ is given by
\begin{subequations}\label{AB_addopt}
	\begin{align}\label{AB_addopta}
	\wt{\mb{x}}_{k+1} &= \wt{\mc{B}}~\wt{\mb{x}}_k - \alpha\Pi_r\mb{y}_k,\\\label{AB_addoptb}
	\mb{x}_{k+1} &= \left(\mbox{diag}(n\bpi_r)\otimes I_p\right)^{-1}\wt{\mb{x}}_{k+1},\\\label{AB_addoptc}
	\mb{y}_{k+1} &= \mc{B}\mb{y}_k + \nabla \mb{f}(\mb{x}_{k+1})-\nabla \mb{f}(\mb{x}_k),
	\end{align}
\end{subequations}
where it is straightforward to show that~$\mc{\wt{B}}= \Pi_r\mc{A}\Pi_r^{-1}$ is now CS and~$\wt{\mc{B}}\left(\bpi_r\otimes I_p\right)=\bpi_r \otimes I_p$. 

In order to implement the above equations, two different CS matrices~($\wt{\mc{B}}$ and~$\mc{B}$) suffice, as long as they are primitive and respect the graph topology. The second update requires the right-eigenvector of the CS matrix used in the first update, i.e.,~$\wt{\mc{B}}$. Since this eigenvector is not known locally to any agent, ADD-OPT/Push-DIGing~\cite{add-opt,diging} propose learning this eigenvector with the following iterations:~$\mb{w}_{k+1}=\wt{\mc{B}}\mb{w}_{k},\mb{w}_0=\mb{1}_{np}$. The algorithms provided in~\cite{add-opt,diging} essentially implement Eqs.~\eqref{AB_addopt}, albeit with two differences: (i) the same CS weight matrix is used in all updates; and, (ii) the division in Eq.~\eqref{AB_addoptb} is replaced by the estimated component,~$\mb{w}_{k+1}^i$, of the left-eigenvector at each agent. This nonlinearity causes stability issues in ADD-OPT/Push-DIGing, whereas their convergence compared to~$\mc{AB}$ is slower because such an eigenvector estimation is not needed in the latter on the account of using the RS weights. Furthermore, the local step-sizes are now given by~$n\alpha[\bpi_r]_i$ that shows that ADD-OPT/Push-DIGing should work with nonidentical step-sizes. 
 
 \noindent \textbf{Optimization with RS weights}: The state transformation technique discussed above also leads to an algorithm from~$\mc{AB}$ that only requires RS weights. Since~$\mc{A}$ in~$\mc{AB}$ is RS, a transformation now is imposed on the~$\mb{y}_k$-update and is given by~$\wt{\mb{y}}_k\triangleq \Pi_c^{-1}\mb{y}_k$, where~$\Pi_c \triangleq \mbox{diag}(\bpi_c)\otimes I_p,$ and~$\bpi_c$ is the right-eigenvector of the CS weight matrix,~$B$, corresponding to the eigenvalue~$1$. Equivalently,~$\mc{AB}$ is given by 
 \begin{subequations}
 	\begin{align}
 	\mb{x}_{k+1} 
 	&= \mc{A}\mb{x}_k -\alpha\Pi_c\wt{\mb{y}}_k, \label{AB_frosta}\\
 	\wt{\mb{y}}_{k+1} &= \wt{\mc{A}}\wt{\mb{y}}_k + \Pi_c^{-1}\left(\nabla \mb{f}(\mb{x}_{k+1})-\nabla \mb{f}(\mb{x}_k)\right) \label{AB_frostb},
 	\end{align}
 \end{subequations}
 where~$\wt{\mc{A}}=\Pi_c^{-1}\mc{B}\Pi_c$ is now RS and~$\left(\bpi_c^\top\otimes I_p\right)\wt{\mc{A}}=\bpi_c^\top\otimes I_p$. Since the above form of~$\mc{AB}$ cannot be implemented because~$\bpi_c$ is not locally known, an eigenvector estimation is used in FROST~\cite{linear_row,FROST} and the division in Eq.~\eqref{AB_frostb} is replaced with the appropriate estimated component of~$\bpi_c$. The observations on different weight matrices in the two updates, nonidentical step-sizes, stability, and convergence made earlier for ADD-OPT/Push-DIGing are also applicable here. 
 
In conclusion, the~$\mc{AB}$ algorithm has various equivalent representations and several already-known protocols can in fact be derived from these representations. In a similar way,~$\mc{AB}m$ leads to protocols that add momentum to Aug-DGM, ADD-OPT/Push-DIGing, and FROST. We will revisit the relationship and equivalence cast here in Sections~\ref{s6} and~\ref{s7}. In Section~\ref{s6}, we will show that both~$\mc{AB}$ and~$\mc{AB}m$ naturally provide a non-trivial class of average-consensus algorithms, a special case of which are~\cite{ac_row} and surplus consensus~\cite{ac_Cai1}. In Section~\ref{s7}, we will compare these algorithms numerically. 
 
 \section{Convergence Analysis}\label{s4}
 We now start the convergence analysis of the proposed distributed heavy-ball method,~$\mc{AB}m$. In the following, we first provide some auxiliary results borrowed from the literature. 
 
\subsection{Auxiliary Results}
The following lemma establishes contractions with RS and CS matrices under arbitrary norms~\cite{AB}; note thacontraction in the Euclidean norm is not applicable unless the weight matrix is DS as in~\cite{harness,diging}. A similar result was first presented in~\cite{add-opt} for CS matrices, and later in~\cite{linear_row,FROST} for RS matrices. 
 \begin{lem}\label{contra}
 	Consider the augmented weight matrices~$\mc{A}$ and~$\mc{B}$. There exist vector norms, denoted as~$\left\|\cdot\right\|_\mc{A}$ and~$\left\|\cdot\right\|_\mc{B}$, such that~$\forall\mb{x}\in\mbb{R}^{np}$,
 	\begin{align}\label{A_ctr}
 	\left\|\mc{A}\mb{x}-\mc{A}_\infty\mb{x}\right\|_\mc{A}&\leq\sigma_\mc{A}\left\|\mb{x}-\mc{A}_\infty\mb{x}\right\|_\mc{A},\\\label{B_ctr}
 	\left\|\mc{B}\mb{x}-\mc{B}_\infty\mb{x}\right\|_\mc{B}&\leq\sigma_\mc{B}\left\|\mb{x}-\mc{B}_\infty\mb{x}\right\|_\mc{B},
 	\end{align}
 	where~$0<\sigma_\mc{A}<1$ and~$0<\sigma_\mc{B}<1$ are some constants.  
 \end{lem}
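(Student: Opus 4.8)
The plan is to treat the two inequalities separately but by the same mechanism: show that the deviation operator $\mc{A}-\mc{A}_\infty$ (respectively $\mc{B}-\mc{B}_\infty$) has spectral radius strictly less than one, and then invoke the standard fact that any matrix admits a vector norm under which its induced operator norm is arbitrarily close to its spectral radius. First I would record the algebraic identities that recast the left-hand side into a contractible form. Since $A$ is row-stochastic with $A\mb{1}_n=\mb{1}_n$ and $\bpi_r^\top A=\bpi_r^\top$, and $\mc{A}_\infty=(\mb{1}_n\bpi_r^\top)\otimes I_p$, one checks directly that $\mc{A}\mc{A}_\infty=\mc{A}_\infty\mc{A}=\mc{A}_\infty$ and $\mc{A}_\infty^2=\mc{A}_\infty$, so that $(\mc{A}-\mc{A}_\infty)\mc{A}_\infty=0$. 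Consequently, with $\mb{z}\triangleq(I-\mc{A}_\infty)\mb{x}$,
\[
\mc{A}\mb{x}-\mc{A}_\infty\mb{x}=(\mc{A}-\mc{A}_\infty)\mb{x}=(\mc{A}-\mc{A}_\infty)\mb{z},
\]
which exhibits the left-hand side as $(\mc{A}-\mc{A}_\infty)$ applied to $\mb{x}-\mc{A}_\infty\mb{x}$.

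The crux is then to prove $\rho(\mc{A}-\mc{A}_\infty)<1$. Because every agent keeps $a_{ii}>0$ (as $i\in\mc{N}_i^{\mbox{in}}$) and $\mc{G}$ is strongly-connected (Assumption~\ref{asp1}), the matrix $A$ is primitive; by the Perron--Frobenius theorem the eigenvalue $1$ is simple and all remaining eigenvalues of $A$ lie strictly inside the unit disk. Since $A_\infty=\mb{1}_n\bpi_r^\top$ is exactly the spectral projector onto the one-dimensional eigenspace associated with the eigenvalue $1$, subtracting it annihilates that eigenvalue while leaving the rest untouched; hence the spectrum of $A-A_\infty$ is that of $A$ with the $1$ replaced by $0$, giving $\rho(A-A_\infty)<1$. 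The Kronecker structure $\mc{A}-\mc{A}_\infty=(A-A_\infty)\otimes I_p$ only replicates this spectrum, so $\rho(\mc{A}-\mc{A}_\infty)=\rho(A-A_\infty)<1$.

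To convert the spectral-radius bound into a norm inequality, I would appeal to the standard result (e.g.,~\cite{matrix}) that for any square matrix $M$ and any $\ve>0$ there is a vector norm whose induced matrix norm satisfies $\|M\|\le\rho(M)+\ve$. Applying this to $M=\mc{A}-\mc{A}_\infty$ and choosing $\ve$ small enough that $\sigma_\mc{A}\triangleq\rho(\mc{A}-\mc{A}_\infty)+\ve<1$ yields a norm $\|\cdot\|_\mc{A}$ with $\|\mc{A}-\mc{A}_\infty\|_\mc{A}\le\sigma_\mc{A}$ (note $\sigma_\mc{A}>0$ holds automatically since $\ve>0$). Combining with the identity above,
\[
\left\|\mc{A}\mb{x}-\mc{A}_\infty\mb{x}\right\|_\mc{A}=\left\|(\mc{A}-\mc{A}_\infty)(\mb{x}-\mc{A}_\infty\mb{x})\right\|_\mc{A}\le\sigma_\mc{A}\left\|\mb{x}-\mc{A}_\infty\mb{x}\right\|_\mc{A},
\]
which is~\eqref{A_ctr}.

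The inequality~\eqref{B_ctr} would follow by the symmetric argument applied to the column-stochastic matrix $B$: here $B\bpi_c=\bpi_c$, $\mb{1}_n^\top B=\mb{1}_n^\top$, and $B_\infty=\bpi_c\mb{1}_n^\top$ is again the rank-one spectral projector for the eigenvalue $1$, so the same three identities and the same Perron--Frobenius reasoning give $\rho(\mc{B}-\mc{B}_\infty)<1$ and a corresponding norm $\|\cdot\|_\mc{B}$ with constant $\sigma_\mc{B}<1$. The main obstacle I anticipate is the spectral step---verifying carefully that $A_\infty$ removes precisely the Perron eigenvalue and nothing else (equivalently, that $A-A_\infty$ and $A$ share all other eigenvalues); this is where primitivity is indispensable, since without it $A$ could carry additional unimodular eigenvalues and the deviation operator would fail to contract. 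The remaining work---the Kronecker bookkeeping and the passage from spectral radius to an approximating norm---is routine.
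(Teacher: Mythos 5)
Your argument is correct and is essentially the proof the paper defers to its reference \cite{AB} (and earlier \cite{add-opt}): the identity $(\mc{A}-\mc{A}_\infty)\mc{A}_\infty=0$, the spectral fact $\rho(\mc{A}-\mc{A}_\infty)=\rho(A-A_\infty)<1$ via primitivity and Perron--Frobenius, and the standard norm construction giving an induced norm within $\ve$ of the spectral radius. Nothing is missing; the only remark is that one can sidestep the spectral-projector step entirely by noting $A^k=A_\infty+(A-A_\infty)^k\to A_\infty$, which forces $\rho(A-A_\infty)<1$ directly.
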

 
The next lemma from~\cite{AB} states that the sum of~$\mb{y}_k^i$'s preserves the sum of local gradients. This is a direct consequence of the dynamic consensus~\cite{DAC} employed with CS weights in the~$\mb{y}_k$-update of~$\mc{AB}m$.
 \begin{lem}\label{sum}
 	$(\mb{1}_n^\top \otimes I_p) \mb{y}_k = (\mb{1}_n^\top \otimes I_p) \nabla\mb{f}(\mb{x}_k),\forall k$.
 \end{lem}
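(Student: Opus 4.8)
The plan is to prove this by induction on~$k$, exploiting two facts: the initialization~$\mb{y}^i_0=\nabla f_i(\mb{x}^i_0)$ and the column-stochasticity of~$B$. The single algebraic ingredient I would isolate first is the identity~$(\mb{1}_n^\top\otimes I_p)\mc{B}=\mb{1}_n^\top\otimes I_p$. This follows from the mixed-product property of the Kronecker product together with~$\mb{1}_n^\top B=\mb{1}_n^\top$ (which is exactly the defining column-stochastic condition~$\sum_{i=1}^n b_{ij}=1,\forall j$), since
\begin{equation*}
(\mb{1}_n^\top\otimes I_p)(B\otimes I_p)=(\mb{1}_n^\top B)\otimes(I_p I_p)=\mb{1}_n^\top\otimes I_p.
\end{equation*}
In words, left-multiplication by~$\mb{1}_n^\top\otimes I_p$ annihilates the effect of the mixing matrix~$\mc{B}$, which is precisely the structural feature that makes gradient tracking preserve the aggregate.

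For the base case~$k=0$, I would simply invoke the stated initialization~$\mb{y}_0=\nabla\mb{f}(\mb{x}_0)$, so that~$(\mb{1}_n^\top\otimes I_p)\mb{y}_0=(\mb{1}_n^\top\otimes I_p)\nabla\mb{f}(\mb{x}_0)$ holds trivially. For the inductive step, assume the claim at iteration~$k$, apply~$(\mb{1}_n^\top\otimes I_p)$ to the~$\mb{y}$-update in Eq.~\eqref{ABmvb}, and use the identity above to reduce the mixing term:
\begin{align*}
(\mb{1}_n^\top\otimes I_p)\mb{y}_{k+1}
&=(\mb{1}_n^\top\otimes I_p)\mc{B}\mb{y}_k
+(\mb{1}_n^\top\otimes I_p)\big(\nabla\mb{f}(\mb{x}_{k+1})-\nabla\mb{f}(\mb{x}_k)\big)\\
&=(\mb{1}_n^\top\otimes I_p)\mb{y}_k
+(\mb{1}_n^\top\otimes I_p)\big(\nabla\mb{f}(\mb{x}_{k+1})-\nabla\mb{f}(\mb{x}_k)\big).
\end{align*}
Substituting the inductive hypothesis~$(\mb{1}_n^\top\otimes I_p)\mb{y}_k=(\mb{1}_n^\top\otimes I_p)\nabla\mb{f}(\mb{x}_k)$ into the first term produces a telescoping cancellation of the two~$\nabla\mb{f}(\mb{x}_k)$ contributions, leaving~$(\mb{1}_n^\top\otimes I_p)\mb{y}_{k+1}=(\mb{1}_n^\top\otimes I_p)\nabla\mb{f}(\mb{x}_{k+1})$, which closes the induction.

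Honestly, I do not expect any genuine obstacle here: the result is a conserved-quantity (invariant) statement, and the entire content is the column-stochastic cancellation combined with the correctly chosen initial condition. The only point requiring a moment of care is the Kronecker bookkeeping in the identity~$(\mb{1}_n^\top\otimes I_p)\mc{B}=\mb{1}_n^\top\otimes I_p$, and the observation that the momentum term in Eq.~\eqref{ABmva} is irrelevant since it never enters the~$\mb{y}$-recursion. Everything else is routine, so I would keep the write-up to the induction skeleton above.
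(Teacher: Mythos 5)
Your proof is correct and is the standard argument for this invariant: the paper itself does not reprove Lemma~\ref{sum} (it cites it from~\cite{AB}), and the induction you give—base case from the initialization~$\mb{y}_0=\nabla\mb{f}(\mb{x}_0)$ plus the column-stochastic cancellation~$(\mb{1}_n^\top\otimes I_p)\mc{B}=\mb{1}_n^\top\otimes I_p$—is exactly the argument used there. Nothing is missing.
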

 
 The next lemma is standard in the convex optimization theory~\cite{bertsekas1999nonlinear}. It states that the distance to the optimizer contracts at each step in the standard gradient descent method.
 \begin{lem}\label{centr_d}
 	Let~$F$ be~$\mu$-strongly-convex and~$l$-smooth. For~$0<\alpha<\frac{2}{l}$, we have ~$$\left\|\mb{x}-\alpha\nabla F(\mb{x})-\mb{x}^*\right\| \leq\sigma_F\left\|\mb{x}-\mb{x}^*\right\|,$$ where~$\sigma_F=\max\left(\left|1-\mu \alpha\right|,\left|1-l\alpha \right|\right)$.
 \end{lem}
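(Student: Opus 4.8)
The plan is to use optimality of~$\mb{x}^*$, i.e.,~$\nabla F(\mb{x}^*)=\mb{0}_p$, to recenter the gradient map about its fixed point and thereby reduce the claim to a single spectral-norm estimate. First I would rewrite
\[
\mb{x}-\alpha\nabla F(\mb{x})-\mb{x}^*=(\mb{x}-\mb{x}^*)-\alpha\big(\nabla F(\mb{x})-\nabla F(\mb{x}^*)\big),
\]
which exposes the gradient increment~$\nabla F(\mb{x})-\nabla F(\mb{x}^*)$, to which the~$\mu$-strong-convexity and~$l$-smoothness of~$F$ both apply.

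The cleanest route uses the averaged-Hessian representation. Writing
\[
\nabla F(\mb{x})-\nabla F(\mb{x}^*)=H\,(\mb{x}-\mb{x}^*),\qquad H\triangleq\int_0^1\nabla^2 F\big(\mb{x}^*+t(\mb{x}-\mb{x}^*)\big)\,dt,
\]
the increment collapses to~$(I-\alpha H)(\mb{x}-\mb{x}^*)$, so that
\[
\left\|\mb{x}-\alpha\nabla F(\mb{x})-\mb{x}^*\right\|\leq\left\|I-\alpha H\right\|\left\|\mb{x}-\mb{x}^*\right\|.
\]
Strong convexity and~$l$-smoothness force~$\mu I\preceq H\preceq lI$; since~$I-\alpha H$ is symmetric, its spectrum lies in~$[\,1-\alpha l,\ 1-\alpha\mu\,]$ and hence~$\left\|I-\alpha H\right\|=\max(|1-\alpha\mu|,\,|1-\alpha l|)=\sigma_F$, exactly the claimed factor.

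It then remains only to confirm~$\sigma_F<1$ on the stated interval. For~$0<\alpha<\tfrac{2}{l}$ one has~$-1<1-\alpha l<1$, so~$|1-\alpha l|<1$; and because~$\mu\leq l$ gives~$\tfrac{2}{l}\leq\tfrac{2}{\mu}$, the same bound on~$\alpha$ yields~$-1<1-\alpha\mu<1$, so~$|1-\alpha\mu|<1$. Thus~$\sigma_F<1$ and the gradient step is a genuine contraction toward~$\mb{x}^*$.

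The one delicate point I anticipate is the appeal to~$\nabla^2 F$: a~$\mu$-strongly-convex,~$l$-smooth~$F$ need not be twice differentiable. A direct expansion of~$\left\|\mb{x}-\alpha\nabla F(\mb{x})-\mb{x}^*\right\|^2$ using only~$\mu$-strong monotonicity and~$l$-Lipschitzness of the gradient gives the factor~$1-2\alpha\mu+\alpha^2 l^2$, which is \emph{looser} than~$\sigma_F^2=\max((1-\alpha\mu)^2,(1-\alpha l)^2)$, so it does not recover the sharp constant. The averaged-Hessian argument (equivalently, proving the bound for~$C^2$ functions and passing to the limit by mollification) is therefore what pins down~$\sigma_F$ precisely. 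Since the statement is standard~\cite{bertsekas1999nonlinear}, I would present the averaged-Hessian line as the main proof and dispatch the non-$C^2$ case in a single remark.
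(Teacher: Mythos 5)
The paper offers no proof of this lemma at all --- it is stated as standard and attributed to~\cite{bertsekas1999nonlinear} --- so there is no in-paper argument to compare against; your write-up is a correct, self-contained substitute. The averaged-Hessian computation does pin down the sharp factor (one small nit: since the spectrum of~$H$ lies in~$[\mu,l]$ but need not attain the endpoints, you get~$\|I-\alpha H\|\leq\max(|1-\alpha\mu|,|1-\alpha l|)$ rather than equality, which is all that is needed), and your check that~$\sigma_F<1$ on~$(0,2/l)$ is right. You are also right to flag the~$C^2$ issue, and mollification closes it: convolution with a nonnegative kernel preserves both~$\mu$-strong convexity and~$l$-smoothness, and one passes to the limit in the two-point estimate~$\|G_\alpha(\mb{x})-G_\alpha(\mb{y})\|\leq\sigma_F\|\mb{x}-\mb{y}\|$ before specializing~$\mb{y}=\mb{x}^*$ (the minimizer of the mollified function is irrelevant). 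An alternative worth recording, which avoids second derivatives entirely and still recovers the sharp constant, is the co-coercivity inequality for~$\mu$-strongly convex,~$l$-smooth~$F$,
\begin{equation*}
(\mu+l)\left\langle\nabla F(\mb{x})-\nabla F(\mb{y}),\mb{x}-\mb{y}\right\rangle\geq\mu l\left\|\mb{x}-\mb{y}\right\|^2+\left\|\nabla F(\mb{x})-\nabla F(\mb{y})\right\|^2,
\end{equation*}
combined with~$\mu\|\mb{x}-\mb{y}\|\leq\|\nabla F(\mb{x})-\nabla F(\mb{y})\|\leq l\|\mb{x}-\mb{y}\|$: expanding~$\|G_\alpha(\mb{x})-G_\alpha(\mb{y})\|^2$ and using these bounds gives exactly~$(1-\alpha\mu)^2\|\mb{x}-\mb{y}\|^2$ when~$0<\alpha\leq\tfrac{2}{\mu+l}$ and~$(1-\alpha l)^2\|\mb{x}-\mb{y}\|^2$ when~$\tfrac{2}{\mu+l}\leq\alpha<\tfrac{2}{l}$, i.e., precisely~$\sigma_F^2$. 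Your ``direct expansion'' loses sharpness only because it bounds the cross term by~$\mu$-monotonicity alone rather than by co-coercivity.
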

 
\newpage
Finally, we provide a result from nonnegative matrix theory.
 \begin{lem}\label{rho}(Theorem 8.1.29 in~\cite{matrix})
 	Let $X\in\mathbb{R}^{n\times n}$ be a nonnegative matrix and~$\mb{x}\in\mathbb{R}^{n}$ be a positive vector. If~$X\mb{x}<\omega\mb{x}$ with~$\omega>0$, then~$\rho(X)<\omega$. 
 \end{lem}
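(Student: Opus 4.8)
The plan is to reduce the claim to the elementary fact that the spectral radius of any matrix is bounded above by any induced matrix norm, by first applying a diagonal similarity transformation that converts the weighted hypothesis~$X\mb{x}<\omega\mb{x}$ into a statement about ordinary row sums. Since~$\mb{x}$ is a positive vector, the diagonal matrix~$D\triangleq\mbox{diag}(\mb{x})$ is invertible, and similarity preserves the spectrum, so~$\rho(X)=\rho(D^{-1}XD)$. I would therefore work with~$\wt{X}\triangleq D^{-1}XD$ throughout.

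Next I would compute the entries of~$\wt{X}$ explicitly: its~$(i,j)$ entry is~$[X]_{ij}[\mb{x}]_j/[\mb{x}]_i$, so~$\wt{X}$ is again nonnegative, and its~$i$th row sum equals~$\tfrac{1}{[\mb{x}]_i}\sum_j[X]_{ij}[\mb{x}]_j=[X\mb{x}]_i/[\mb{x}]_i$. The componentwise hypothesis~$X\mb{x}<\omega\mb{x}$ is precisely the statement that every such row sum is strictly less than~$\omega$. Hence~$\|\wt{X}\|_\infty=\max_i\sum_j[\wt{X}]_{ij}<\omega$, where~$\|\cdot\|_\infty$ denotes the maximum-absolute-row-sum norm (the absolute values being redundant here since~$\wt{X}\geq0$).

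Finally I would invoke the standard inequality~$\rho(M)\leq\|M\|$, valid for any submultiplicative (in particular, any induced) matrix norm~$\|\cdot\|$, applied to~$\wt{X}$ with the~$\infty$-norm. This yields~$\rho(X)=\rho(\wt{X})\leq\|\wt{X}\|_\infty<\omega$, the desired strict bound. I do not anticipate a genuine obstacle: the only conceptual step is recognizing that rescaling the coordinates by~$\mb{x}$ turns the weighted inequality into an ordinary row-sum inequality, after which the argument is immediate and requires neither irreducibility nor a Perron-type eigenvector. The sole point needing mild care is that strictness survives the maximum over the rows, which it does because there are finitely many rows and each row sum lies strictly below~$\omega$.
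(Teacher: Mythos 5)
Your argument is correct: the diagonal similarity $\wt{X}=D^{-1}XD$ with $D=\mbox{diag}(\mb{x})$ turns the hypothesis into the strict row-sum bound $\|\wt{X}\|_\infty<\omega$, and $\rho(X)=\rho(\wt{X})\leq\|\wt{X}\|_\infty$ finishes it. The paper itself gives no proof---it cites this as Theorem 8.1.29 of Horn and Johnson---and your argument is essentially the standard one found there, so there is nothing to add.
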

 
 \subsection{Main results}\label{s5}
The convergence analysis of~$\mc{AB}m$ is based on deriving a contraction relationship between the following four quantities: 
\begin{inparaenum}[(i)]
\item $\|\mb{x}_{k+1}-\mc{A}_\infty\mb{x}_{k+1}\|_\mc{A}$, the consensus error in the network; 
\item $\|\mc{A}_\infty\mb{x}_{k+1}-\mb{1}_n \otimes \mb{x}^*\|$, the optimality gap; 
\item $\left\|\mb{x}_{k+1}-\mb{x}_k\right\|$, the state difference; and 
\item $\|\mb{y}_{k+1}-\mc{B}_\infty\mb{y}_{k+1}\|_\mc{B}$, the (biased) gradient estimation error.
\end{inparaenum}
We will establish an LTI-system inequality where the state vector is the collection of these four quantities and then develop the convergence properties of the corresponding system matrix. Before we proceed, note that since all vector norms on finite-dimensional vector spaces are equivalent~\cite{matrix}, there exist positive constants~$c_{\mathcal{A}\mathcal{B}},c_{\mathcal{B}\mathcal{A}},c_{2\mathcal{A}},c_{\mathcal{A}2},c_{2\mathcal{B}},c_{\mathcal{B}2}$ such that
\begin{align*}
\|\cdot\|_\mathcal{A} &\leq c_{\mathcal{A}\mathcal{B}}\|\cdot\|_\mathcal{B},~~
\|\cdot\|   \leq c_{2\mathcal{A}}\|\cdot\|_\mathcal{A},~~
\|\cdot\|_\mathcal{A} \leq c_{\mathcal{A}2}\|\cdot\|,
\\
\|\cdot\|_\mathcal{B} &\leq c_{\mathcal{B}\mathcal{A}}\|\cdot\|_\mathcal{A},
~~\|\cdot\|  \leq c_{2\mathcal{B}}\|\cdot\|_\mathcal{B},
~~\|\cdot\|_\mathcal{B}  \leq c_{\mathcal{B}2}\|\cdot\|.
\end{align*}    
We also define~$\ol{\alpha}\triangleq\left[\bs{\alpha}\right]_{\max}$ and~$\ol{\beta}\triangleq\left[\bs{\beta}\right]_{\max}$. In the following, we first provide an upper bound on the estimate,~$\mb{y}_k$, of the gradient of the global objective that will be useful in deriving the aforementioned LTI system.
\begin{lem}\label{y}
	The following inequality holds,~$\forall k$:
	\begin{align*}
	\|\mb{y}_k\| \leq&~ c_{2\mc{A}}\ol{l}\left\|\mc{B}_{\infty}\right\|\|\mb{x}_k-\mc{A}_\infty\mb{x}_k\|_\mc{A} 
	+ c_{2\mc{B}}\|\mb{y}_k-\mc{B}_\infty\mb{y}_k\|_\mc{B}\\
	&+ \ol{l}\left\|\mc{B}_{\infty}\right\|\|\mc{A}_\infty\mb{x}_k-\mb{1}_n \otimes \mb{x}^*\|.
	\end{align*}
\end{lem}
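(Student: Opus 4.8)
The plan is to control $\|\mb{y}_k\|$ by decomposing it into its network-average component $\mc{B}_\infty\mb{y}_k$ and the residual $\mb{y}_k-\mc{B}_\infty\mb{y}_k$, and bounding the two pieces by completely different means. I would start from the triangle inequality in the Euclidean norm,
\[
\|\mb{y}_k\| \leq \|\mb{y}_k-\mc{B}_\infty\mb{y}_k\| + \|\mc{B}_\infty\mb{y}_k\|,
\]
and immediately dispose of the residual via the norm-equivalence constant, $\|\mb{y}_k-\mc{B}_\infty\mb{y}_k\|\leq c_{2\mc{B}}\|\mb{y}_k-\mc{B}_\infty\mb{y}_k\|_\mc{B}$. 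This reproduces the middle term of the claimed bound verbatim, and all remaining work concerns $\|\mc{B}_\infty\mb{y}_k\|$.

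For that term I would invoke Lemma~\ref{sum}. Factoring $\mc{B}_\infty = (\bpi_c\mb{1}_n^\top)\otimes I_p = (\bpi_c\otimes I_p)(\mb{1}_n^\top\otimes I_p)$ and using $(\mb{1}_n^\top\otimes I_p)\mb{y}_k = (\mb{1}_n^\top\otimes I_p)\nabla\mb{f}(\mb{x}_k)$ gives $\mc{B}_\infty\mb{y}_k = \mc{B}_\infty\nabla\mb{f}(\mb{x}_k)$, replacing the auxiliary variable by the true gradients. The key observation is then that $\mc{B}_\infty\nabla\mb{f}(\mb{1}_n\otimes\mb{x}^*)=\mb{0}$: by the same factorization it equals $(\bpi_c\otimes I_p)\sum_{i=1}^n\nabla f_i(\mb{x}^*)$, and $\sum_i\nabla f_i(\mb{x}^*) = n\,\nabla F(\mb{x}^*)=\mb{0}_p$ by first-order optimality of $\mb{x}^*$. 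Subtracting this zero lets me write $\mc{B}_\infty\mb{y}_k = \mc{B}_\infty\big(\nabla\mb{f}(\mb{x}_k)-\nabla\mb{f}(\mb{1}_n\otimes\mb{x}^*)\big)$, so that $\|\mc{B}_\infty\mb{y}_k\|\leq\|\mc{B}_\infty\|\,\|\nabla\mb{f}(\mb{x}_k)-\nabla\mb{f}(\mb{1}_n\otimes\mb{x}^*)\|$.

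From here the Lipschitz assumption (Assumption~\ref{asp3}) finishes the estimate. Componentwise, $\|\nabla f_i(\mb{x}_k^i)-\nabla f_i(\mb{x}^*)\|\leq l_i\|\mb{x}_k^i-\mb{x}^*\|\leq\ol{l}\,\|\mb{x}_k^i-\mb{x}^*\|$, and summing squares over $i$ yields $\|\nabla\mb{f}(\mb{x}_k)-\nabla\mb{f}(\mb{1}_n\otimes\mb{x}^*)\|\leq\ol{l}\,\|\mb{x}_k-\mb{1}_n\otimes\mb{x}^*\|$. I would then split the total error about the network average, $\mb{x}_k-\mb{1}_n\otimes\mb{x}^* = (\mb{x}_k-\mc{A}_\infty\mb{x}_k)+(\mc{A}_\infty\mb{x}_k-\mb{1}_n\otimes\mb{x}^*)$, apply the triangle inequality, and convert the consensus-error piece to the $\|\cdot\|_\mc{A}$ norm through $\|\cdot\|\leq c_{2\mc{A}}\|\cdot\|_\mc{A}$. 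Carrying the constant $\ol{l}\,\|\mc{B}_\infty\|$ onto both resulting terms (the consensus term additionally picking up $c_{2\mc{A}}$) reproduces the first and third terms of the statement exactly, completing the bound.

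None of these steps is genuinely difficult; the argument is a chain of triangle inequalities and norm equivalences. The one step that carries the real content — and the only place a misstep is possible — is recognizing that Lemma~\ref{sum} turns $\mc{B}_\infty\mb{y}_k$ into $\mc{B}_\infty\nabla\mb{f}(\mb{x}_k)$ and that the optimality condition $\sum_i\nabla f_i(\mb{x}^*)=\mb{0}_p$ makes $\mc{B}_\infty\nabla\mb{f}(\mb{1}_n\otimes\mb{x}^*)$ vanish. This ``subtracting a well-chosen zero'' is precisely what allows the gradient magnitude $\|\mb{y}_k\|$ to be expressed in terms of the consensus error and the optimality gap rather than an uncontrolled gradient norm.
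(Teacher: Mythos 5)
Your proposal is correct and follows essentially the same route as the paper's proof: the same split $\|\mb{y}_k\|\leq c_{2\mc{B}}\|\mb{y}_k-\mc{B}_\infty\mb{y}_k\|_\mc{B}+\|\mc{B}_\infty\mb{y}_k\|$, the same use of Lemma~\ref{sum} together with $\sum_i\nabla f_i(\mb{x}^*)=\mb{0}_p$ to subtract a well-chosen zero, and the same decomposition of $\mb{x}_k-\mb{1}_n\otimes\mb{x}^*$ about $\mc{A}_\infty\mb{x}_k$. The only (immaterial) difference is that you bound $\|\mc{B}_\infty\mb{v}\|$ by submultiplicativity and a componentwise Lipschitz estimate, whereas the paper factors $\mc{B}_\infty$ explicitly and uses $\|\mc{B}_\infty\|=\sqrt{n}\|\bs{\pi}_c\|$; both yield the identical constant $\ol{l}\|\mc{B}_\infty\|$.
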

\begin{proof}
	Recall that~$\mc{B}_{\infty}= (\bs{\pi}_c\otimes I_p)(\mb{1}_n^\top\otimes I_p).$ We have		
	\begin{equation}\label{inte_3}
	\left\|\mb{y}_k\right\| \leq c_{2\mc{B}}\left\|\mb{y}_k-\mc{B}_\infty\mb{y}_k\right\|_\mc{B} + \left\|\mc{B}_\infty\mb{y}_k\right\|.
	\end{equation}
	We next bound~$\left\|\mc{B}_\infty\mb{y}_k\right\|$:
	\begin{align}\label{inte_4}
	\|\mc B_\infty\mb{y}_k\| =&~ \|(\bs{\pi}_c\otimes I_p)(\mb{1}_n^\top\otimes I_p)\nabla\mb{f}(\mb{x}_k)\|\nonumber,\\
	=&~ \|\bs{\pi}_c\|\left\|{\tsum}_{i=1}^{n}\nabla f_i(\mb{x}_k^i)-{\tsum}_{i=1}^{n}\nabla f_i(\mb{x}^*)\right\| \nonumber,\\
%	\leq&~ \|\bs{\pi}_c\|~\ol{l}~\tsum_{i=1}^{n}\|\mb{x}_k^i-\mb{x}^*\| \nonumber,\\
	\leq&~\|\bs{\pi}_c\|~\ol{l}~\sqrt{n}\|\mb{x}_k-\mb{1}_n\otimes \mb{x}^*\|, \nonumber\\
	\leq&~c_{2\mc{A}}~\ol{l}~\left\|\mc{B}_{\infty}\right\| \|\mb{x}_k-\mc A_\infty\mb{x}_k\|_\mc{A} \nonumber\\
	&+~\ol{l}~\left\|\mc{B}_{\infty}\right\|\|\mc A_\infty\mb{x}_k-\mb{1}_n \otimes \mb{x}^*\|,
	\end{align}
	where the first inequality uses Jensen's inequality and the last inequality uses the fact that~$\left\|\mc{B}_{\infty}\right\|=\sqrt{n}\|\bs{\pi}_c\|$. The lemma follows by plugging Eq.~\eqref{inte_4} into Eq.~\eqref{inte_3}.
\end{proof}

In the next Lemmas~\ref{xc}-\ref{yc}, we derive the relationships among the four quantities mentioned above. We start with a bound on~$\|\mb{x}_{k+1}-\mc{A}_\infty\mb{x}_{k+1}\|_\mc{A}$, the consensus error in the network. 
\begin{lem}\label{xc}
The following inequality holds,~$\forall k$:
\begin{align*}
\|\mb{x}&_{k+1}-\mc{A}_\infty\mb{x}_{k+1}\|_\mc{A} \\
\leq&\left(\sigma_{\mc{A}}+\ol{\alpha} c_{\mc{A}2}c_{2\mc{A}}~\ol{l}\left\|I_{np}-\mc{A}_\infty\right\|\left\|\mc{B}_{\infty}\right\|\right) \left\|\mb{x}_{k}-\mc{A}_\infty\mb{x}_{k}\right\|_\mc{A} \\ 
&+ \ol{\alpha} c_{\mc{A}2}~\ol{l}\left\|I_{np}-\mc{A}_\infty\right\| \left\|B_{\infty}\right\|\|\mc{A}_\infty\mb{x}_k-\mb{1}_n \otimes \mb{x}^*\| \\ 
&+ \ol\beta c_{\mc{A}2}\left\|I_{np}-\mc{A}_\infty\right\| \left\|\mb{x}_k-\mb{x}_{k-1}\right\|\nonumber\\
&+ \ol{\alpha}c_{\mc{A}2}c_{2\mc{B}}\left\|I_{np}-\mc{A}_\infty\right\|\left\|\mb{y}_k-\mc{B}_\infty\mb{y}_k\right\|_\mc{B}.
\end{align*}
\end{lem}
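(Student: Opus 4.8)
The plan is to start from the $\mb{x}$-update~\eqref{ABmva}, project it onto the complement of the consensus direction by left-multiplying with $I_{np}-\mc{A}_\infty$, and then isolate a term that can be fed directly into the row-stochastic contraction of Lemma~\ref{contra}. First I would write $\mb{x}_{k+1}-\mc{A}_\infty\mb{x}_{k+1}=(I_{np}-\mc{A}_\infty)\mb{x}_{k+1}$ and substitute~\eqref{ABmva} to obtain
\begin{align*}
\mb{x}_{k+1}-\mc{A}_\infty\mb{x}_{k+1}=&~(I_{np}-\mc{A}_\infty)\mc{A}\mb{x}_k-(I_{np}-\mc{A}_\infty)D_{\bds{\alpha}}\mb{y}_k\\
&+(I_{np}-\mc{A}_\infty)D_{\bds{\beta}}(\mb{x}_k-\mb{x}_{k-1}).
\end{align*}
The crucial algebraic fact I would exploit is $\mc{A}_\infty\mc{A}=\mc{A}_\infty$, which holds because $\bpi_r^\top A=\bpi_r^\top$ and hence $A_\infty A=\mb{1}_n\bpi_r^\top A=\mb{1}_n\bpi_r^\top=A_\infty$. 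This gives $(I_{np}-\mc{A}_\infty)\mc{A}\mb{x}_k=\mc{A}\mb{x}_k-\mc{A}_\infty\mb{x}_k$, which is exactly the left-hand side of the RS contraction in Lemma~\ref{contra} evaluated at $\mb{x}=\mb{x}_k$; I would therefore bound its $\|\cdot\|_\mc{A}$ norm by $\sigma_\mc{A}\|\mb{x}_k-\mc{A}_\infty\mb{x}_k\|_\mc{A}$.

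For the two remaining terms I would take $\|\cdot\|_\mc{A}$ norms, apply the triangle inequality, and pass to the Euclidean norm via the equivalence $\|\cdot\|_\mc{A}\le c_{\mc{A}2}\|\cdot\|$. Using submultiplicativity together with $\|D_{\bds{\alpha}}\|=\ol{\alpha}$ and $\|D_{\bds{\beta}}\|=\ol{\beta}$ (valid since these are diagonal matrices, Kronecker-multiplied by $I_p$, with largest entries $\ol{\alpha}$ and $\ol{\beta}$), this produces $\ol{\alpha}c_{\mc{A}2}\|I_{np}-\mc{A}_\infty\|\,\|\mb{y}_k\|$ and $\ol{\beta}c_{\mc{A}2}\|I_{np}-\mc{A}_\infty\|\,\|\mb{x}_k-\mb{x}_{k-1}\|$. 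The second of these is already the third term in the claimed bound.

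Finally, I would substitute the bound on $\|\mb{y}_k\|$ from Lemma~\ref{y} into the $\ol{\alpha}c_{\mc{A}2}\|I_{np}-\mc{A}_\infty\|\,\|\mb{y}_k\|$ expression, which splits it into three pieces proportional to $\|\mb{x}_k-\mc{A}_\infty\mb{x}_k\|_\mc{A}$, $\|\mc{A}_\infty\mb{x}_k-\mb{1}_n\otimes\mb{x}^*\|$, and $\|\mb{y}_k-\mc{B}_\infty\mb{y}_k\|_\mc{B}$. Collecting the first piece with the $\sigma_\mc{A}$ contraction term yields the coefficient $\sigma_\mc{A}+\ol{\alpha}c_{\mc{A}2}c_{2\mc{A}}\,\ol{l}\|I_{np}-\mc{A}_\infty\|\|\mc{B}_\infty\|$, while the remaining two pieces match the second and fourth terms of the statement, completing the derivation. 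I expect no serious obstacle: the entire argument is a projection followed by routine norm manipulation. The only point demanding care is recognizing that the identity $\mc{A}_\infty\mc{A}=\mc{A}_\infty$ is precisely what lets the residual $(I_{np}-\mc{A}_\infty)\mc{A}\mb{x}_k$ be cast in the exact form $\mc{A}\mb{x}_k-\mc{A}_\infty\mb{x}_k$ that Lemma~\ref{contra} requires; without it one is left with a stray $\mc{A}_\infty$ factor that does not admit the contraction.
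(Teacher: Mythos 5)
Your proposal is correct and follows essentially the same route as the paper: both exploit $\mc{A}_\infty\mc{A}=\mc{A}_\infty$ to reduce the consensus residual of $\mc{A}\mb{x}_k$ to the exact form required by the contraction in Lemma~\ref{contra}, bound the step-size and momentum terms via $\left\|\cdot\right\|_\mc{A}\le c_{\mc{A}2}\left\|\cdot\right\|$ with $\left\|D_{\bds{\alpha}}\right\|\le\ol{\alpha}$ and $\left\|D_{\bds{\beta}}\right\|\le\ol{\beta}$, and then substitute Lemma~\ref{y} for $\left\|\mb{y}_k\right\|$. No gaps.
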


\newpage
\begin{proof}
First, note that~$\mc{A}_{\infty}\mc{A}=\mc{A}_{\infty}$. Following the~$\mb{x}_k$-update of~$\mc{AB}m$ in Eq.~\eqref{ABmva} and using the one-step contraction property of~$\mc{A}$ from Lemma~\ref{contra}, we have:
\begin{align*}
\big\|\mb{x}&_{k+1}-\mc{A}_\infty\mb{x}_{k+1}\big\|_\mc{A} \nonumber\\
=& \big\|\mc{A}\mb{x}_k -D_{\bds{\alpha}}\mb{y}_k + D_{\bds{\b}} (\mb{x}_k-\mb{x}_{k-1})
\\&-A_\infty\big(\mc{A}\mb{x}_k -D_{\bds{\alpha}}\mb{y}_k +D_{\bds{\b}} (\mb{x}_k-\mb{x}_{k-1})\big)\big\|_\mc{A},	\nonumber\\
\leq&~\sigma_{\mc{A}} \left\|\mb{x}_{k}-\mc{A}_\infty\mb{x}_{k}\right\|_\mc{A} + \ol{\alpha}~c_{\mc{A}2}\left\|I_{np}-\mc{A}_\infty\right\| \left\|\mb{y}_k\right\|
\\&+ \ol{\beta}~c_{\mc{A}2}\left\|I_{np}-\mc{A}_\infty\right\| \left\|\mb{x}_k-\mb{x}_{k-1}\right\|, 
\end{align*}
and the proof follows from Lemma~\ref{y}.
\end{proof}
Next, we derive a bound for~$\left\|\mc{A}_\infty\mb{x}_{k+1}-\mb{1}_n\otimes \mb{x}^*\right\|$, which can be interpreted as the optimality gap between the network accumulation state,~$\mc{A}_\infty\mb{x}_k$, and the global minimizer,~$\mb{1}_n\otimes\mb{x}^*$.
\begin{lem}\label{xo}
The following inequality holds, $\forall k$, when

\noindent $0<\bpi_r^\top\mbox{diag}(\bds{\alpha})\bpi_c<\frac{2}{nl}$: 
	\begin{align}\label{2}
	\|\mc{A}_\infty\mb{x}&_{k+1}-\mb{1}_n \otimes \mb{x}^*\| \nonumber\\ 
	\leq&~\ol{\alpha}\left(\bpi_r^\top\bpi_{c}\right)n\ol{l}c_{2\mc{A}}\left\|\mb{x}_{k}-\mc{A}_\infty\mb{x}_{k}\right\|_\mc{A}  \nonumber\\
	&+\lambda \left\|  \mc{A}_{\infty}\mb{x}_k -\mb{1}_n \otimes \mb{x}^* \right\| \nonumber\nonumber\\
	&+ \beta \|\mc{A}_{\infty}\| \|\mb{x}_k-\mb{x}_{k-1}\| \nonumber \\ 
	&+ \ol{\alpha} c_{2B}\|\mc{A}_\infty\|\left\|\mb{y}_k-\mc{B}_{\infty}\mb{y}_k\right\|_\mc{B},
	\end{align}
	where~{\small$\lambda=\max\left\{\left|1-\mu n \bpi_r^\top\mbox{diag}(\bds{\alpha})\bpi_c\right|,
		\left|1-ln\bpi_r^\top\mbox{diag}(\bds{\alpha})\bpi_c \right|\right\}.
		$}
\end{lem}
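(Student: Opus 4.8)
The plan is to left-multiply the $\mb{x}_k$-update Eq.~\eqref{ABmva} by $\mc{A}_\infty$ and show that the projected recursion behaves like a centralized gradient-descent step on $F$ with a single effective scalar step-size, so that Lemma~\ref{centr_d} applies verbatim. Since $\bpi_r^\top A=\bpi_r^\top$ gives $\mc{A}_\infty\mc{A}=\mc{A}_\infty$, I first obtain
\[
\mc{A}_\infty\mb{x}_{k+1} = \mc{A}_\infty\mb{x}_k - \mc{A}_\infty D_{\bds{\alpha}}\mb{y}_k + \mc{A}_\infty D_{\bds{\beta}}(\mb{x}_k - \mb{x}_{k-1}).
\]
Writing $\mc{A}_\infty=(\mb{1}_n\otimes I_p)(\bpi_r^\top\otimes I_p)$ and setting $\ol{\mb{x}}_k\triangleq(\bpi_r^\top\otimes I_p)\mb{x}_k$ so that $\mc{A}_\infty\mb{x}_k=\mb{1}_n\otimes\ol{\mb{x}}_k$, the entire difficulty lives in the gradient term $\mc{A}_\infty D_{\bds{\alpha}}\mb{y}_k$.

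To handle it I split $\mb{y}_k=\mc{B}_\infty\mb{y}_k+(\mb{y}_k-\mc{B}_\infty\mb{y}_k)$. For the first piece, using $\mc{B}_\infty=(\bpi_c\otimes I_p)(\mb{1}_n^\top\otimes I_p)$ together with the gradient-sum preservation of Lemma~\ref{sum}, a direct computation yields
\[
\mc{A}_\infty D_{\bds{\alpha}}\mc{B}_\infty\mb{y}_k
= \big(\bpi_r^\top\mbox{diag}(\bs\a)\bpi_c\big)\Big(\mb{1}_n\otimes{\tsum}_{i=1}^n\nabla f_i(\mb{x}_k^i)\Big)
= \wt\alpha\,\Big(\mb{1}_n\otimes\tfrac1n{\tsum}_i\nabla f_i(\mb{x}_k^i)\Big),
\]
where $\wt\alpha\triangleq n\,\bpi_r^\top\mbox{diag}(\bs\a)\bpi_c$ is the effective step-size. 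This is the key step: the heterogeneous local step-sizes collapse into one scalar $\wt\alpha$, and Lemma~\ref{sum} converts the auxiliary variable into the full gradient sum. I then add and subtract $\wt\alpha(\mb{1}_n\otimes\nabla F(\ol{\mb{x}}_k))$; the correction $\tfrac1n\sum_i[\nabla f_i(\mb{x}_k^i)-\nabla f_i(\ol{\mb{x}}_k)]$ is controlled by Assumption~\ref{asp3} (with $\ol{l}=\max_i l_i$) and a Cauchy--Schwarz/Jensen bound $\tfrac1n\sum_i\|\mb{x}_k^i-\ol{\mb{x}}_k\|\le\tfrac1{\sqrt n}\|\mb{x}_k-\mc{A}_\infty\mb{x}_k\|$, producing the consensus-error term. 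The residual $\mc{A}_\infty D_{\bds{\alpha}}(\mb{y}_k-\mc{B}_\infty\mb{y}_k)$ is bounded submultiplicatively by $\ol{\alpha}\|\mc{A}_\infty\|\,\|\mb{y}_k-\mc{B}_\infty\mb{y}_k\|$ and converted to the $\|\cdot\|_\mc{B}$ norm, giving the gradient-estimation-error term, while the momentum term follows from $\|\mc{A}_\infty D_{\bds{\beta}}\|\le\ol{\beta}\|\mc{A}_\infty\|$.

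Collecting everything, $\mc{A}_\infty\mb{x}_{k+1}-\mb{1}_n\otimes\mb{x}^*=\mb{1}_n\otimes\big(\ol{\mb{x}}_k-\wt\alpha\nabla F(\ol{\mb{x}}_k)-\mb{x}^*\big)$ plus the three error vectors. Taking the Euclidean norm, the leading term equals $\sqrt n\,\|\ol{\mb{x}}_k-\wt\alpha\nabla F(\ol{\mb{x}}_k)-\mb{x}^*\|$, which, provided $0<\wt\alpha<\tfrac2l$ (equivalently the stated range $0<\bpi_r^\top\mbox{diag}(\bs\a)\bpi_c<\tfrac{2}{nl}$), is at most $\sqrt n\,\lambda\|\ol{\mb{x}}_k-\mb{x}^*\|=\lambda\|\mc{A}_\infty\mb{x}_k-\mb{1}_n\otimes\mb{x}^*\|$ by Lemma~\ref{centr_d}, with $\lambda=\sigma_F$ evaluated at step-size $\wt\alpha$. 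Finally I use $\mbox{diag}(\bs\a)\preceq\ol{\alpha}I$ to upper-bound $\wt\alpha$ by $n\ol{\alpha}\,\bpi_r^\top\bpi_c$ in the consensus-error coefficient and insert the norm-equivalence constants $c_{2\mc{A}},c_{2\mc{B}}$, which recovers Eq.~\eqref{2} exactly. I expect the main obstacle to be the bookkeeping of the gradient term, specifically verifying the identity for $\mc{A}_\infty D_{\bds{\alpha}}\mc{B}_\infty$ and cleanly extracting $\wt\alpha$, since this is where the row/column eigenvectors, Lemma~\ref{sum}, and the smoothness/strong-convexity constants must all be reconciled to reproduce the centralized contraction factor $\lambda$.
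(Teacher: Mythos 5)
Your proposal is correct and follows essentially the same route as the paper's proof: project the update with $\mc{A}_\infty$, split $\mb{y}_k$ via $\mc{B}_\infty$ and invoke Lemma~\ref{sum}, collapse $\mc{A}_\infty D_{\bds{\alpha}}\mc{B}_\infty$ into the effective scalar step-size $n\bpi_r^\top\mbox{diag}(\bds{\alpha})\bpi_c$ so that Lemma~\ref{centr_d} yields the $\lambda$-contraction, and bound the gradient-inconsistency, momentum, and gradient-tracking residuals exactly as in the paper's $s_1$/$s_2$ decomposition. The only cosmetic difference is that you phrase the $s_2$ bound through a Jensen/Cauchy--Schwarz average rather than the paper's direct $\ol{l}c_{2\mc{A}}$ Lipschitz step, but the resulting coefficient $\ol{\alpha}\left(\bpi_r^\top\bpi_c\right)n\ol{l}c_{2\mc{A}}$ is identical.
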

\begin{proof}
	Recall the~$\mb{x}_k$-update of~$\mc{AB}m$ in Eq.~\eqref{ABmva}, we have
	that
	\begin{align}\label{21}
	\|\mc{A}_\infty\mb{x}&_{k+1}-\mb{1}_n \otimes \mb{x}^*\| \nonumber\\
	=& \left\|\mc{A}_\infty\big(\mc{A}\mb{x}_k -D_{\bds{\alpha}}\mb{y}_k + D_{\bds\b} (\mb{x}_k-\mb{x}_{k-1})\big)-\mb{1}_n \otimes \mb{x}^*\right\|, \nonumber\\
	=&~\big\|\mc{A}_\infty\big(\mc{A}\mb{x}_k -D_{\bds{\alpha}}\mb{y}_k + (D_{\bds{\alpha}}-D_{\bds{\alpha}})\mc{B}_{\infty}\mb{y}_k \nonumber\\
	&~~+ D_{\bds\b} (\mb{x}_k-\mb{x}_{k-1})\big)-\mb{1}_n \otimes \mb{x}^*\big\| \nonumber,\\
	\leq& \left\|\mc{A}_{\infty}\mb{x}_k-\mc{A}_{\infty}D_{\bds{\alpha}}\mc{B}_{\infty}\nabla\mb{f}\left(\mb{x}_k\right)-\left(\mb{1}_n \otimes I_p\right) \mb{x}^* \right\| \nonumber\\
	&+ \ol\beta \|\mc{A}_{\infty}\| \|\mb{x}_k-\mb{x}_{k-1}\| \nonumber\\ 
	&+ \ol{\alpha} c_{2B}\|\mc{A}_\infty\|\left\|\mb{y}_k-\mc{B}_{\infty}\mb{y}_k\right\|_{\mc{B}},
	\end{align}
where in the last inequality, we use ~$\mc{B}_{\infty}\mb{y}_k=\mc{B}_{\infty}\nabla\mb{f}\left(\mb{x}_k\right)$ adapted from Lemma~\ref{sum}. Since the last two terms in Eq.~\eqref{21} match the last two terms in Eq.~\eqref{2}, what is left is to bound the first term. Before we proceed, define
\begin{align*}
\wt{\mb{x}}_k  &\triangleq  (\bds{\pi}^\top_r \otimes I_p)\mb{x}_k,                                                                    \\
\nabla\mb{f}\left((\mb{1}_n\otimes I_p)\wt{\mb{x}}_k\right)
&\triangleq \left[\nabla f_1(\wt{\mb{x}}_k)^\top,\cdots,\nabla f_n(\wt{\mb{x}}_k)^\top\right]^\top,
\end{align*}
and note that
	\begin{align*}
	&\mc{A}_{\infty}D_{\bds{\alpha}}\mc{B}_{\infty} \\
	=& \left(\mb{1}_n \otimes I_p\right)\left(\bds{\pi}^\top_r \otimes I_p\right)
	\left(\mbox{diag}(\bds{\alpha})\otimes I_p\right)
	\left(\bds{\pi}_c \otimes I_p\right)\left(\mb{1}_n^\top \otimes I_p\right) \nonumber\\
	=& \left(\bpi_r^\top\mbox{diag}(\bds{\alpha})\bpi_c\right)\left(\mb{1}_n\otimes I_p\right)\left(\mb{1}_n^\top\otimes I_p\right).
	\end{align*}
	Now we bound the first term in Eq.~\eqref{21}. We have
    {\small\begin{align*}
	\|\mc{A}&_{\infty}\mb{x}_k-\mc{A}_{\infty}D_{\bds{\alpha}}\mc{B}_{\infty}\nabla\mb{f}(\mb{x}_k)-\left(\mb{1}_n \otimes I_p\right) \mb{x}^* \| \nonumber \\
	=&\Big\|  \left(\mb{1}_n \otimes I_p\right)\Big(\wt{\mb{x}}_k-(\bpi_r^\top\mbox{diag}(\bds{\alpha})\bpi_c)(\mb{1}_n^\top\otimes I_p) \nabla\mb{f}(\mb{x}_k)-\mb{x}^*  \Big) \Big\| \nonumber,\\
	\leq& \left\|  \left(\mb{1}_n \otimes I_p\right)\left(\wt{\mb{x}}_k -n (\bpi_r^\top\mbox{diag}(\bds{\alpha})\bpi_c) \nabla F(\wt{\mb{x}}_k)-\mb{x}^*  \right)  \right\| \nonumber\\
	&+\bpi_r^\top\mbox{diag}(\bds{\alpha})\bpi_c\left\|\left(\mb{1}_n \otimes I_p\right)\big(n\nabla F(\wt{\mb{x}}_k)- (\mb{1}_n^\top\otimes I_p) \nabla\mb{f}(\mb{x}_k)\big)  \right\| \nonumber,\\
	\triangleq&~s_1 + s_2, \nonumber
	\end{align*}}and we bound~$s_1$ and~$s_2$ next. Using Lemma~\ref{centr_d}, we have that if~$0<\bpi_r^\top\mbox{diag}(\bds{\alpha})\bpi_c<\frac{2}{nl}$,
	\begin{align}\label{s1b}
	s_1 &= \sqrt{n} \left\|\wt{\mb{x}}_k -n (\bpi_r^\top\mbox{diag}(\bds{\alpha})\bpi_c) \nabla F(\wt{\mb{x}}_k)-\mb{x}^*\right\|  \nonumber,\\
	&\leq \sqrt{n}\lambda \left\|\wt{\mb{x}}_k -\mb{x}^*\right\|  \nonumber,\\
	&= \lambda \left\|  \mc{A}_{\infty}\mb{x}_k -\mb{1}_n \otimes \mb{x}^* \right\|,
	\end{align}
	where~{\small$\lambda=\max\left\{\left|1-\mu n \bpi_r^\top\mbox{diag}(\bds{\alpha})\bpi_c\right|,
	\left|1-ln\bpi_r^\top\mbox{diag}(\bds{\alpha})\bpi_c \right|\right\}.
	$}
	We next bound~$s_2$. Since~{\small$\nabla F(\wt{\mb{x}}_k)=\frac{1}{n}(\mb{1}_n^\top\otimes I_p)\nabla\mb{f}(\wt{\mb{x}}_k)$}, 
	\begin{align}\label{23}
	s_2
	& \leq \left(\bpi_r^\top\mbox{diag}(\bds{\alpha})\bpi_c\right)n\left\|\nabla\mb{f}\left((\mb{1}_n\otimes I_p)\wt{\mb{x}}_k\right)-\nabla\mb{f}(\mb{x}_k)\right\|   \nonumber,\\ 
	&\leq
	\left(\bpi_r^\top\mbox{diag}(\bds{\alpha})\bpi_c\right)n\ol{l}c_{2\mc{A}}\left\|\mb{x}_{k}-\mc{A}_\infty\mb{x}_{k}\right\|_\mc{A}, \nonumber\\
	&\leq
	\ol{\alpha}\left(\bpi_r^\top\bpi_{c}\right)n\ol{l}c_{2\mc{A}}\left\|\mb{x}_{k}-\mc{A}_\infty\mb{x}_{k}\right\|_\mc{A},
	\end{align}
and the lemma follows from Eqs.~\eqref{s1b},~\eqref{23}, and~\eqref{21}.
\end{proof}

The next step is to bound the state difference,~$\left\|\mb{x}_{k+1}-\mb{x}_k\right\|$.
\begin{lem}\label{m}
The following inequality holds,~$\forall k$:
\begin{align}\label{3}
\|\mb{x}&_{k+1}-\mb{x}_{k}\| \nonumber\\
\leq&~\left(c_{2\mc{A}}\left\|\mc{A}-I_{np}\right\|+\ol{\alpha}c_{2\mc{A}}\ol{l}\left\|\mc{B}_{\infty}\right\|\right)\left\|\mb{x}_k-\mc{A}_\infty\mb{x}_k\right\|_\mc{A} \nonumber\\
&+\ol{\alpha}\ol{l}\left\|\mc{B}_{\infty}\right\|\|\mc{A}_\infty\mb{x}_k-\mb{1}_n \otimes \mb{x}^*\| \nonumber\\
&+\ol{\beta}\left\|\mb{x}_{k}-\mb{x}_{k-1}\right\|
+\ol{\alpha}c_{2\mc{B}}\|\mb{y}_k-\mc{B}_\infty\mb{y}_k\|_\mc{B}. \nonumber
\end{align}
\end{lem}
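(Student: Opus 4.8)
The plan is to read the state difference directly off the~$\mb{x}_k$-update in Eq.~\eqref{ABmva}. Subtracting~$\mb{x}_k$ from both sides gives
\begin{equation*}
\mb{x}_{k+1}-\mb{x}_k = (\mc{A}-I_{np})\mb{x}_k - D_{\bds{\alpha}}\mb{y}_k + D_{\bds{\beta}}(\mb{x}_k-\mb{x}_{k-1}),
\end{equation*}
after which I would take the Euclidean norm and apply the triangle inequality to split the right-hand side into three pieces, bounding each separately. The target bound has exactly four terms, so the only real work is to route the three pieces correctly into those four slots.

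The one non-mechanical observation is how to handle the first piece,~$(\mc{A}-I_{np})\mb{x}_k$. Since~$A$ is row-stochastic we have~$\mc{A}\mc{A}_\infty=\mc{A}_\infty$, hence~$(\mc{A}-I_{np})\mc{A}_\infty=0$, and I would use this to rewrite~$(\mc{A}-I_{np})\mb{x}_k=(\mc{A}-I_{np})(\mb{x}_k-\mc{A}_\infty\mb{x}_k)$. This is the key step, because it converts a quantity a priori controlled only by the full state into one controlled by the consensus residual~$\mb{x}_k-\mc{A}_\infty\mb{x}_k$, which is one of the four tracked quantities. Submultiplicativity then gives~$\|(\mc{A}-I_{np})\mb{x}_k\|\leq\|\mc{A}-I_{np}\|\,\|\mb{x}_k-\mc{A}_\infty\mb{x}_k\|$, and the norm equivalence~$\|\cdot\|\leq c_{2\mc{A}}\|\cdot\|_\mc{A}$ produces the~$c_{2\mc{A}}\|\mc{A}-I_{np}\|$ contribution to the coefficient of~$\|\mb{x}_k-\mc{A}_\infty\mb{x}_k\|_\mc{A}$.

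For the remaining two pieces I would use that~$D_{\bds{\alpha}}$ and~$D_{\bds{\beta}}$ are diagonal (tensored with~$I_p$), so~$\|D_{\bds{\alpha}}\|=\ol{\alpha}$ and~$\|D_{\bds{\beta}}\|=\ol{\beta}$. This immediately yields the state-difference term~$\|D_{\bds{\beta}}(\mb{x}_k-\mb{x}_{k-1})\|\leq\ol{\beta}\|\mb{x}_k-\mb{x}_{k-1}\|$, and~$\|D_{\bds{\alpha}}\mb{y}_k\|\leq\ol{\alpha}\|\mb{y}_k\|$. Substituting the bound on~$\|\mb{y}_k\|$ from Lemma~\ref{y} into the latter distributes~$\ol{\alpha}$ over the three terms of that lemma and produces exactly the~$\ol{\alpha}c_{2\mc{A}}\ol{l}\|\mc{B}_\infty\|$ addition to the consensus-error coefficient, the optimality-gap term~$\ol{\alpha}\ol{l}\|\mc{B}_\infty\|\|\mc{A}_\infty\mb{x}_k-\mb{1}_n\otimes\mb{x}^*\|$, and the gradient-tracking-error term~$\ol{\alpha}c_{2\mc{B}}\|\mb{y}_k-\mc{B}_\infty\mb{y}_k\|_\mc{B}$. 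Collecting the four contributions yields the claimed inequality. I do not expect a genuine obstacle here: the lemma is structurally parallel to Lemma~\ref{xc}, the difference being that I bound in the Euclidean norm with the operator~$\mc{A}-I_{np}$ replacing the~$\|\cdot\|_\mc{A}$-contraction step used there, so the only care required is the annihilation identity above and the bookkeeping that sends each term of Lemma~\ref{y} to its correct slot.
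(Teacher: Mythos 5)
Your proposal is correct and follows essentially the same route as the paper's proof: the paper likewise rewrites $(\mc{A}-I_{np})\mb{x}_k$ as $(\mc{A}-I_{np})(\mb{x}_k-\mc{A}_\infty\mb{x}_k)$ using $\mc{A}\mc{A}_\infty=\mc{A}_\infty$, bounds the remaining pieces by $\ol{\alpha}\|\mb{y}_k\|$ and $\ol{\beta}\|\mb{x}_k-\mb{x}_{k-1}\|$, and then invokes Lemma~\ref{y}. No gaps.
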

\begin{proof}
Note that~$\mc{A}\mc{A}_{\infty}=\mc{A}_{\infty}$ and hence~$\mc{A}\mc{A}_{\infty}-\mc{A}_{\infty}$ is a zero matrix. Following the~$\mb{x}_k$-update of~$\mc{AB}m$, we have: 
{\small\begin{align}
\|\mb{x}&_{k+1}-\mb{x}_{k}\| \nonumber\\
=& \left\| \mc{A}\mb{x}_k -D_{\bds{\alpha}}\mb{y}_k + D_{\bs\b} (\mb{x}_k-\mb{x}_{k-1}) -\mb{x}_k \right\|	\nonumber,\\
=& \left\| (\mc{A}-I_{np})(\mb{x}_k-\mc{A}_{\infty}\mb{x}_k) -D_{\bds{\alpha}}\mb{y}_k + D_{\bs\b} (\mb{x}_k-\mb{x}_{k-1}) \right\|	\nonumber,\\
\leq&~c_{2\mc{A}}\left\|\mc{A}-I_{np}\right\|\left\|\mb{x}_k-\mc{A}_{\infty}\mb{x}_k\right\|_\mc{A}+\ol{\beta}\left\|\mb{x}_{k}-\mb{x}_{k-1}\right\| + \ol{\alpha} \left\|\mb{y}_k\right\| , \nonumber
\end{align}}
and the proof follows from Lemma~\ref{y}.
\end{proof}
The final step in formulating the LTI system is to write~$\left\|\mb{y}_{k+1}-\mc{B}_{\infty}\mb{y}_{k+1}\right\|$, the biased gradient estimation error, in terms of the other three quantities. We call this biased to make a distinction with the unbiased gradient estimation error:~$\left\|\mb{y}_{k+1}-\mc{W}_\infty\mb{y}_{k+1}\right\|$, where~$\mc{W}$ is doubly-stochastic. 
\begin{lem}\label{yc}
The following inequality holds,~$\forall k$:
\begin{align}
\|\mb{y}&_{k+1}-\mc{B}_{\infty}\mb{y}_{k+1}\| \nonumber\\
=&\Big(c_{2\mc{A}}c_{\mc{B}2}~\ol{l}\left\|I_{np}-\mc{B}_{\infty}\right\|\left\|\mc{A}-I_{np}\right\| \nonumber\\
&+\ol{\alpha}c_{2\mc{A}}c_{\mc{B}2}~\ol{l}^2\left\|I_{np}-\mc{B}_{\infty}\right\|\left\|\mc{B}_{\infty}\right\|\Big)\left\|\mb{x}_k-\mc{A}_\infty\mb{x}_k\right\|_\mc{A} \nonumber\\
&+\ol{\alpha} c_{\mc{B}2}~\ol{l}^{2}\left\|I_{np}-\mc{B}_{\infty}\right\|\left\|\mc{B}_{\infty}\right\|\|\mc{A}_\infty\mb{x}_k-\mb{1}_n \otimes \mb{x}^*\| \nonumber\\
&+\ol\beta c_{\mc{B}2}~\ol{l}\left\|I_{np}-\mc{B}_{\infty}\right\|\left\|\mb{x}_{k}-\mb{x}_{k-1}\right\|\nonumber\\
&+\Big(\sigma_\mc{B}+\ol{\alpha} c_{\mc{B}2}c_{2\mc{B}}~\ol{l}\left\|I_{np}-\mc{B}_{\infty}\right\|\Big)\left\|\mb{y}_k-\mc{B}_\infty\mb{y}_k\right\|_\mc{B} \nonumber.
\nonumber
\end{align}
\end{lem}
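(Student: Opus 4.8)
The plan is to propagate the $\mb{y}_k$-update \eqref{ABmvb} through the deviation $\mb{y}_{k+1}-\mc{B}_\infty\mb{y}_{k+1}$ and then reduce everything to the four basic quantities, using the already-established bound on $\|\mb{x}_{k+1}-\mb{x}_k\|$ from Lemma~\ref{m}. First I would record the absorption identity $\mc{B}_\infty\mc{B}=\mc{B}_\infty$ for the column-stochastic limit $\mc{B}_\infty=(\bpi_c\mb{1}_n^\top)\otimes I_p$, which follows from $\mb{1}_n^\top B=\mb{1}_n^\top$. Substituting \eqref{ABmvb} and invoking this identity gives the clean splitting
\[
\mb{y}_{k+1}-\mc{B}_\infty\mb{y}_{k+1}
=\big(\mc{B}\mb{y}_k-\mc{B}_\infty\mb{y}_k\big)
+\big(I_{np}-\mc{B}_\infty\big)\big(\nabla\mb{f}(\mb{x}_{k+1})-\nabla\mb{f}(\mb{x}_k)\big),
\]
so that the triangle inequality in $\|\cdot\|_\mc{B}$ separates a consensus-type term from a gradient-increment term.

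For the first term, observe that $\mc{B}\mb{y}_k-\mc{B}_\infty\mb{y}_k$ is already in the exact form to which Lemma~\ref{contra} applies (with $\mb{x}=\mb{y}_k$), yielding the contraction $\sigma_\mc{B}\|\mb{y}_k-\mc{B}_\infty\mb{y}_k\|_\mc{B}$. For the second term I would pass to the Euclidean norm via $\|\cdot\|_\mc{B}\le c_{\mc{B}2}\|\cdot\|$, pull out the factor $\|I_{np}-\mc{B}_\infty\|$, and then apply Assumption~\ref{asp3} (componentwise Lipschitz continuity with constant $\ol{l}=\max_i l_i$) to get $\|\nabla\mb{f}(\mb{x}_{k+1})-\nabla\mb{f}(\mb{x}_k)\|\le\ol{l}\,\|\mb{x}_{k+1}-\mb{x}_k\|$. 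At this intermediate stage I have
\[
\|\mb{y}_{k+1}-\mc{B}_\infty\mb{y}_{k+1}\|_\mc{B}
\le\sigma_\mc{B}\|\mb{y}_k-\mc{B}_\infty\mb{y}_k\|_\mc{B}
+c_{\mc{B}2}\,\ol{l}\,\|I_{np}-\mc{B}_\infty\|\,\|\mb{x}_{k+1}-\mb{x}_k\|.
\]

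The last step is to eliminate the state difference $\|\mb{x}_{k+1}-\mb{x}_k\|$ by substituting its bound from Lemma~\ref{m}, which is itself already expressed in the four quantities. Multiplying the scalar $c_{\mc{B}2}\ol{l}\|I_{np}-\mc{B}_\infty\|$ through the four terms of Lemma~\ref{m}, and adding the $\sigma_\mc{B}$-contraction to the resulting coefficient of $\|\mb{y}_k-\mc{B}_\infty\mb{y}_k\|_\mc{B}$, reproduces the four coefficients in the statement exactly; in particular the coefficient of the consensus error splits into the two pieces $c_{2\mc{A}}c_{\mc{B}2}\ol{l}\|I_{np}-\mc{B}_\infty\|\|\mc{A}-I_{np}\|$ and $\ol{\alpha}c_{2\mc{A}}c_{\mc{B}2}\ol{l}^{2}\|I_{np}-\mc{B}_\infty\|\|\mc{B}_\infty\|$ (the displayed ``$=$'' in the statement should be read as ``$\le$''). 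I do not expect any genuine difficulty here, since there is no optimality or convexity estimate to make, only the contraction of $\mc{B}$ and a Lipschitz bound; the one point demanding care — and the closest thing to an obstacle — is the bookkeeping of constants after the substitution, together with verifying the absorption identity at the outset so that the increment term carries the clean projector $I_{np}-\mc{B}_\infty$ rather than a stray $\mc{B}$ factor.
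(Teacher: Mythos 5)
Your proposal matches the paper's own proof essentially step for step: the same splitting via $\mc{B}_\infty\mc{B}=\mc{B}_\infty$, the contraction of Lemma~\ref{contra} on the consensus part, the norm-equivalence and Lipschitz bound on the gradient increment to reach the intermediate inequality $\sigma_\mc{B}\|\mb{y}_k-\mc{B}_\infty\mb{y}_k\|_\mc{B}+c_{\mc{B}2}\,\ol{l}\,\|I_{np}-\mc{B}_\infty\|\,\|\mb{x}_{k+1}-\mb{x}_k\|$, and the final substitution of Lemma~\ref{m}. You are also right that the displayed ``$=$'' in the lemma statement is a typo for ``$\leq$''; nothing further is needed.
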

\begin{proof}
Note that~$\mc{B}_{\infty}\mc{B}=\mc{B}_{\infty}$. From Eq.~\eqref{ABmvb}, we have:
{\begin{align} 
\|&\mb{y}_{k+1}-\mc{B}_\infty\mb{y}_{k+1}\|_\mc{B} \nonumber\\
=&~\big\|\mc{B}\mb{y}_k+\nabla \mb{f}(\mb{x}_{k+1})-\nabla \mb{f}(\mb{x}_k)
\nonumber\\
&-\mc{B}_{\infty}\big(\mb{y}_k+\nabla \mb{f}(\mb{x}_{k+1})-\nabla \mb{f}(\mb{x}_k)\big)\big\|_\mc{B} \nonumber\\
\leq&~\sigma_\mc{B}\|\mb{y}(k)-\mc{B}_\infty\mb{y}(k)\|_\mc{B} +c_{\mc{B}2}\ol{l}\left\|I_{np}-\mc{B}_{\infty}\right\|\|\mb{x}_{k+1}-\mb{x}_k\|_2, \nonumber
\end{align}}where in the inequality above we use the contraction property of~$\mc{B}$ from Lemma~\ref{contra}. The proof follows by applying the result of Lemma~\ref{m} to the inequality above.
\end{proof}

With the help of the Lemmas~\ref{xc}-\ref{yc}, we now present the main result of this paper, i.e., the~$\mc{AB}m$ algorithm converges to the global minimizer at a global~$R$-linear rate.
\begin{theorem}\label{R}
Let~$0<\bpi_r^\top\mbox{diag}(\bds{\alpha})\bpi_c<\frac{2}{nl}$, then the following LTI inequality holds entry-wise:
\begin{equation}\label{LMI}
\mb{t}_{k+1} \leq J_{\bds{\alpha},\ol\beta}\mb{t}_{k},
\end{equation}
where~$\mb{t}_{k}\in\mathbb{R}^4$ and~$J_{\bds{\alpha},\ol\b}\in\mathbb{R}^{4\times 4}$ are respectively given by:
{\small\begin{align}
\mb{t}_k&=\left[
\begin{array}{c}
\left\|\mb{x}_{k}-\mc{A}_\infty\mb{x}_{k}\right\|_\mc{A} \\
\left\|\mc{A}_\infty\mb{x}_{k}-\mb{1}_n \otimes \mb{x}^*\right\| \\
\left\|\mb{x}_{k}-\mb{x}_{k-1}\right\| \\
\left\|\mb{y}_{k}-\mc{B}_\infty\mb{y}_{k}\right\|_\mc{B}
\end{array}
\right], \nonumber\\	
J_{\bds{\alpha},\ol{\beta}}&=\left[
\begin{array}{cccc}
\sigma_\mc{A}+a_1\ol{\alpha} & a_2\ol{\alpha} &\ol{\beta} a_3 &a_4\ol{\alpha}\\
a_5\ol{\alpha} & \lambda & \ol{\beta} a_6 & a_7\ol{\alpha}\\
a_8+a_9\ol{\alpha}& a_{10}\ol{\alpha} & \ol{\beta} & a_{11} \ol{\alpha}\\
a_{12}+a_{13}\ol{\alpha}& a_{14}\ol{\alpha}& \ol{\beta} a_{15} & \sigma_\mc{B} + a_{16}\ol{\alpha}
\end{array}
\right], \nonumber
\end{align}} and the constants~$a_i$'s in the above expression are
{\small\begin{eqnarray*}
	a_1 &=& c_{\mc{A}2}c_{2\mc{A}}\ol{l}\left\|I_{np}-\mc{A}_\infty\right\| \left\|\mc{B}_{\infty}\right\|, \\
	a_2 &=& c_{\mc{A}2}\ol{l}\left\|I_{np}-\mc{A}_\infty\right\| \left\|\mc{B}_{\infty}\right\|, \\
	a_3 &=& c_{\mc{A}2}\left\|I_{np}-\mc{A}_\infty\right\|, \\
	a_4 &=& c_{\mc{A}2}c_{2\mc{B}}\left\|I_{np}-\mc{A}_\infty\right\|,\\
	a_5 &=& nc_{2\mc{A}}\left(\bpi_r^\top\bpi_c\right)\ol{l}, \\
	a_6 &=& \|\mc{A}_\infty\|, \\
	a_7 &=& c_{2\mc{B}}\|\mc{A}_\infty\|,  \\
	a_8 &=& c_{2\mc{A}}\left\|\mc{A}-I_{np}\right\|, \\
	a_{9} &=& c_{2\mc{A}}\ol{l}\left\|\mc{B}_{\infty}\right\|, \\
	a_{10} &=& \ol{l}\left\|\mc{B}_{\infty}\right\|, \\
	a_{11} &=& c_{2\mc{B}}, \\
	a_{12} &=& c_{\mc{B}2}c_{2\mc{A}}\ol{l}\left\|I_{np}-\mc{B}_{\infty}\right\|\left\|\mc{A}-I_{np}\right\|,  \\
	a_{13} &=& c_{\mc{B}2}c_{2\mc{A}}\ol{l}^2\left\|I_{np}-\mc{B}_{\infty}\right\|\left\|\mc{B}_{\infty}\right\|, \\
	a_{14} &=& c_{\mc{B}2}\ol{l}^2\left\|I_{np}-\mc{B}_{\infty}\right\|\left\|\mc{B}_{\infty}\right\|, \\
	a_{15} &=& c_{\mc{B}2}\ol{l}\left\|I_{np}-\mc{B}_{\infty}\right\|, \\
	a_{16} &=& c_{\mc{B}2}c_{2\mc{B}}\ol{l}\left\|I_{np}-B_{\infty}\right\|. 
\end{eqnarray*}}When the largest step-size,~$\ol{\alpha}$, satisfies
{\small\begin{align}
0<\ol{\alpha}& < 
\min\Bigg\{\frac{1}{nl\bpi_r^\top \bpi_{c}},\frac{\delta_3-\delta_1a_8}{a_9\delta_1+a_{10}\delta_2+a_{11}\delta_4},
\nonumber\\
&~~~\frac{(1-\sigma_B)\delta_4-\delta_{1}a_{12}}{a_{13}\delta_1+a_{14}\delta_2+a_{14}\delta_4},\frac{(1-\sigma_B)\delta_4-\delta_{1}a_{12}}{a_{13}\delta_1+a_{14}\delta_2+a_{14}\delta_4}\Bigg\} \label{a}
\end{align}}and when the largest momentum parameter,~$\ol{\beta}$, satisfies
{\small\begin{align}
0\leq\beta& <
\min\Bigg\{ 
\frac{\delta_1(1-\sigma_A)-\left(a_1\delta_1+a_2\delta_2+a_4\delta_4\right)\ol{\alpha}}{a_3\delta_3}, \nonumber\\ 
&\frac{\Big(\delta_2\mu[\bpi_r]_{\min}[\bpi_c]_{\min}-\left(a_5\delta_1+a_7\delta_4\right)\Big)\ol{\alpha}}{a_6\delta_3}, \nonumber\\
&\frac{\delta_3-\delta_1a_8-(a_9\delta_1+a_{10}\delta_2+a_{11}\delta_4)\ol{\alpha}}{\delta_3}, \nonumber\\
&\frac{(1-\sigma_B)\delta_4-\delta_{1}a_{12}-(a_{13}\delta_1+a_{14}\delta_2+a_{14}\delta_4)\ol{\alpha}}{a_{15}\delta_3}
\Bigg\}, \label{b}
\end{align}}where~$\delta_1,\delta_2,\delta_3,\delta_4$ are arbitrary constants such that
{\small \begin{align}
\left\{
\begin{array}{lll}
\delta_1 &<& \max \left\{ \frac{\delta_3}{a_8},\frac{(1-\sigma_B)\delta_4}{a_{12}} \right\}, \\
\delta_2 &>& \frac{a_5\delta_1+a_7\delta_4}{\mu[\bpi_r]_{\min}[\bpi_c]_{\min}}, \\
\delta_3 &>& 0, \\
\delta_4 &>& 0,
\end{array}
\right. \nonumber
\end{align}}then~$\rho(J_{\bds{\alpha},\ol\beta})<1$ and thus~$\|\mb{x}_k-\mb{1}_n\otimes\mb{x}^*\|$ converges to zero linearly at the rate of~$\mc{O}(\rho(J_{\bds{\alpha},\ol\beta}))^k$.
\end{theorem}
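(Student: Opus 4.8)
The plan is to first assemble the component-wise LTI inequality~\eqref{LMI} and then certify $\rho(J_{\bds{\alpha},\ol\beta})<1$ via Lemma~\ref{rho}. For the assembly, I would simply stack the four bounds already established in Lemmas~\ref{xc},~\ref{xo},~\ref{m}, and~\ref{yc}: each of these bounds the corresponding entry of $\mb{t}_{k+1}$ by a nonnegative linear combination of the four entries of $\mb{t}_k$, so reading off the coefficients yields exactly the rows of $J_{\bds{\alpha},\ol\beta}$ together with the definitions of $a_1,\dots,a_{16}$. The only facts to record at this stage are that every entry of $J_{\bds{\alpha},\ol\beta}$ is nonnegative and that its diagonal entries are $\sigma_\mc{A}+a_1\ol\alpha$, $\lambda$, $\ol\beta$, and $\sigma_\mc{B}+a_{16}\ol\alpha$, each strictly below $1$ once $\ol\alpha,\ol\beta$ are small and $\gamma\triangleq\bpi_r^\top\mbox{diag}(\bds{\alpha})\bpi_c$ lies in the stated interval.

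The core of the argument is to exhibit a strictly positive vector $\bds{\delta}=(\delta_1,\delta_2,\delta_3,\delta_4)^\top$ with $J_{\bds{\alpha},\ol\beta}\bds{\delta}<\bds{\delta}$ entry-wise; Lemma~\ref{rho} with $\omega=1$ then gives $\rho(J_{\bds{\alpha},\ol\beta})<1$ immediately. Writing out the four scalar inequalities $(J_{\bds{\alpha},\ol\beta}\bds{\delta})_i<\delta_i$ and isolating $\ol\beta$ (respectively $\ol\alpha$) in each row produces precisely the four upper bounds collected in~\eqref{b} and the accompanying $\ol\alpha$-thresholds in~\eqref{a}. The one analytic input needed here is a lower bound on the diagonal slack of the second row: since $\gamma\le\ol\alpha\,\bpi_r^\top\bpi_c<\tfrac{1}{nl}$ under~\eqref{a}, one has $\lambda=1-\mu n\gamma$, and because a single index carries the largest step-size, $\gamma\ge[\bpi_r]_{\min}[\bpi_c]_{\min}\ol\alpha$, whence $1-\lambda\ge\mu n[\bpi_r]_{\min}[\bpi_c]_{\min}\ol\alpha$. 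This is what makes the second-row bound on $\ol\beta$ scale linearly in $\ol\alpha$ and stay positive.

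The main obstacle is feasibility: unlike the first two rows, the third and fourth rows contain off-diagonal constants $a_8$ and $a_{12}$ (arising from $\|\mc{A}-I_{np}\|$ and $\|I_{np}-\mc{B}_\infty\|\|\mc{A}-I_{np}\|$) that do \emph{not} vanish as $\ol\alpha\to0$, so one cannot merely take the step-size small and treat the matrix as diagonally dominant. I would resolve this by choosing the components of $\bds{\delta}$ in a specific order so as to break the apparent circularity: pick $\delta_3,\delta_4>0$ freely; then take $\delta_1$ small enough that the persistent terms $a_8\delta_1$ and $a_{12}\delta_1$ are strictly dominated by the diagonal slacks $\delta_3$ and $(1-\sigma_\mc{B})\delta_4$ (this is the role of the constraint on $\delta_1$); next enlarge $\delta_2$ so that $\mu n[\bpi_r]_{\min}[\bpi_c]_{\min}\delta_2$ dominates $a_5\delta_1+a_7\delta_4$, making the second-row $\ol\beta$-bound positive; finally shrink $\ol\alpha$ below the thresholds in~\eqref{a} (keeping all four $\ol\beta$-bounds positive) and then shrink $\ol\beta$ below the minimum in~\eqref{b}. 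With such $\bds{\delta}$ fixed, all four entries of $J_{\bds{\alpha},\ol\beta}\bds{\delta}<\bds{\delta}$ hold, so $\rho(J_{\bds{\alpha},\ol\beta})<1$ by Lemma~\ref{rho}.

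To conclude, I would iterate the entry-wise inequality~\eqref{LMI} to obtain $\mb{t}_k\le J_{\bds{\alpha},\ol\beta}^{\,k}\mb{t}_0$; since $J_{\bds{\alpha},\ol\beta}\ge0$ with spectral radius below one, $\|J_{\bds{\alpha},\ol\beta}^{\,k}\|=\mc{O}(\rho(J_{\bds{\alpha},\ol\beta})^k)$. As $\|\mb{x}_k-\mb{1}_n\otimes\mb{x}^*\|$ is controlled, up to norm-equivalence constants, by the first two entries of $\mb{t}_k$, this yields global $R$-linear convergence at the rate $\mc{O}(\rho(J_{\bds{\alpha},\ol\beta})^k)$.
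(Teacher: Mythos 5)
Your proposal is correct and follows essentially the same route as the paper: stack Lemmas~\ref{xc}--\ref{yc} to obtain~\eqref{LMI}, then invoke Lemma~\ref{rho} by constructing a positive vector~$\bds{\delta}$ with~$J_{\bds{\alpha},\ol\beta}\bds{\delta}<\bds{\delta}$, choosing~$\delta_3,\delta_4$ first, then~$\delta_1$ small enough to absorb the $\ol\alpha$-independent terms~$a_8\delta_1$ and~$a_{12}\delta_1$, then~$\delta_2$ large enough, and finally~$\ol\alpha$ and~$\ol\beta$. Your observation that~$1-\lambda\ge\mu n[\bpi_r]_{\min}[\bpi_c]_{\min}\ol\alpha$ is exactly the step the paper uses to make the second-row slack scale linearly in~$\ol\alpha$, so the two arguments coincide.
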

\begin{proof}
It is straightforward to verify Eq.~\eqref{LMI} by combining Lemmas~\ref{xc}-\ref{yc}. The next step is to find the range of~$\ol{\alpha}$ and~$\ol{\beta}$ such that~$\rho(J_{\bds{\alpha},\ol\beta})<1$. In the light of Lemma~\ref{rho}, we solve for a positive vector~$\bds{\delta}=[\delta_1,\delta_2,\delta_3,\delta_4]^\top$ and the range of~$\ol{\alpha}$ and~$\ol{\beta}$ such that the following inequality holds:
\begin{equation*}
J_{\bds{\alpha},\ol\beta}\bds{\delta}<\bds{\delta},
\end{equation*}
which is equivalent to the following four conditions: 
{\small \begin{align}
a_3\delta_3\beta &<\delta_1(1-\sigma_A)-\left(a_1\delta_1+a_2\delta_2+a_4\delta_4\right)\ol{\alpha}, \label{i1}\\
a_6\delta_3\beta &<\delta_2-\delta_2\lambda-\left(a_5\delta_1+a_7\delta_4\right)\ol{\alpha}, \label{i2}\\
\delta_3\beta&<\delta_3-\delta_1a_8-(a_9\delta_1+a_{10}\delta_2+a_{11}\delta_4)\ol{\alpha}, \label{i3}\\
a_{15}\delta_3\beta
 &< (1-\sigma_B)\delta_4 -\delta_{1}a_{12}-(a_{13}\delta_1+a_{14}\delta_2+a_{14}\delta_4)\ol{\alpha}. \label{i4}
\end{align}}Recall~$\lambda$ in Lemma~\ref{xo}, when~$\ol{\alpha}<\frac{1}{nl\bpi_r^\top\bpi_c}$, we have
\begin{align*}
\lambda = 1-\mu n\bpi_r^\top\mbox{diag}(\bds{\alpha})\bpi_c
\leq 1 - \mu n[\bpi_r]_{\min}[\bpi_c]_{\min}\ol{\alpha}.
\end{align*} 
Therefore, the third condition in Eq.~\eqref{i2} is satisfied when
\begin{align}
a_6\delta_3\beta < \delta_2\mu n[\bpi_r]_{\min}[\bpi_c]_{\min}\ol{\alpha}-\left(a_5\delta_1+a_7\delta_4\right)\ol{\alpha}. \label{i2'}
\end{align}
For the right hand side of the Eq.~\eqref{i1},~\eqref{i2'},~\eqref{i3} and~\eqref{i4} to be positive, each one of these equations needs to satisfy the conditions we give below.

\vspace{-0.3cm}
{\small\begin{align}\label{a1}
\mbox{Eq.~\eqref{i1}}:&~~\ol{\alpha} < \frac{\delta_1(1-\sigma_A)}{a_1\delta_1+a_2\delta_2+a_4\delta_4},\\\label{a2}
\mbox{Eq.~\eqref{i2'}}:&~~\delta_2 > \frac{a_5\delta_1+a_7\delta_4}{\mu[\bpi_r]_{\min}[\bpi_c]_{\min}},\\\label{a3}
\mbox{Eq.~\eqref{i3}}: &~~
\left\{
\begin{array}{c}
\delta_1 <~\frac{\delta_3}{a_8}, \\
\ol{\alpha} <~ \frac{\delta_3-\delta_1a_8}{a_9\delta_1+a_{10}\delta_2+a_{11}\delta_4}.
\end{array}
\right.\\\label{a4}
\mbox{Eq.~\eqref{i4}}: &~~
\left\{
\begin{array}{c}
\delta_1 <~\frac{(1-\sigma_B)\delta_4}{a_{12}}, \\
\ol{\alpha} <~  \frac{(1-\sigma_B)\delta_4-\delta_{1}a_{12}}{a_{13}\delta_1+a_{14}\delta_2+a_{14}\delta_4}.
\end{array}
\right.
\end{align}}We first choose arbitrary positive constants,~$\delta_3$ and~$\delta_4$,~then pick $\delta_1$ satisfying Eqs.~\eqref{a3} and~\eqref{a4}, and~finally choose~$\delta_2$ according to Eq.~\eqref{a2}. Note that~$\delta_1,\delta_2,\delta_3,$ and~$\delta_4$ are chosen to ensure that the upper bounds on~$\ol{\alpha}$ are all positive. Subsequently, from Eqs.~\eqref{a1},~\eqref{a3}, and~\eqref{a4}, together with the requirement that~$\ol{\alpha}<\frac{1}{nl\bpi_r^\top \bpi_{c}}$, we obtain the upper bound on the largest step-size,~$\ol{\alpha}$. Finally, the original four conditions in Eqs.~\eqref{i1},~\eqref{i2'},~\eqref{i3} and~\eqref{i4} lead to an upper bound on~$\ol{\beta}$, and the theorem follows.
\end{proof}

\vspace{-0.1cm}
\textbf{Remark 1}: In Theorem~\ref{R}, we have established the $R$-linear rate of~$\mc{AB}m$ when the largest step-size,~$\ol{\alpha}$, and the largest momentum parameter,~$\ol{\beta}$, respectively follow the upper bounds described in Eq.~\eqref{a} and Eq.~\eqref{b}. Note that~$\delta_1,\delta_2,\delta_3,\delta_4$ therein are tunable parameters and only depend on the network topology and the objective functions. The upper bounds for~$\ol\alpha$ and~$\ol\beta$ may not be computable for arbitrary directed graphs as the contraction coefficients,~$\sigma_{\mc{A}}$,~$\sigma_{\mc{B}}$, and the norm equivalence constants may be unknown. However, when the graph is undirected, we can obtain computable bounds for~$\ol\alpha$ and~$\ol\beta$, as developed in~\cite{harness,dnesterov} for example. The upper bound on~$\ol{\beta}$ also implies that if the step-sizes are relatively large, only small momentum parameters can be picked to ensure stability. 

\textbf{Remark 2}: The nonidentical step-sizes in gradient tracking methods~\cite{AugDGM,harness} have previously been studied in~\cite{AugDGM,digingstochastic,digingun,lu2018geometrical}. These works rely on some notion of heterogeneity among the step-sizes, defined respectively as the relative deviation of the step-sizes from their average,~$\frac{\|(I-W)\bs{\alpha}\|}{\|W\bs{\alpha}\|}$, in~\cite{AugDGM,digingstochastic}, and as the ratio of the largest to the smallest step-size,~${[\bds{\alpha}]_{\max}}/{[\bds{\alpha}]_{\min}}$, in~\cite{digingun,lu2018geometrical}. The authors then show that when the heterogeneity is sufficiently small and when the largest step-size follows a bound that is a function of the heterogeneity, the proposed algorithms converge to the global minimizer. It is worth noting that sufficiently small step-sizes do not guarantee sufficiently small heterogeneity in both of the above definitions. In contrast, the upper bound on the largest step-size in this paper, Eq.~\eqref{a}, is independent of any notion of heterogeneity and only depends on the objective functions and the network topology. Each agent therefore locally picks a sufficiently small step-size without any coordination. Based on the discussion in Section~\ref{s3}, our approach thus improves the analysis in~\cite{AugDGM,digingstochastic,digingun,lu2018geometrical}. Besides, Eq.~\eqref{a} allows the existence of zero step-sizes among the agents as long as the largest step-size is positive and is sufficiently small. 

\textbf{Remark 3}: To show that~$\mc{AB}m$ has an~$R$-linear rate for sufficiently small~$\ol\alpha$ and~$\ol\beta$, one can alternatively use matrix perturbation analysis as in~\cite{AB} (Theorem 1). However, it does not provide explicit upper bounds on~$\ol\alpha$ and~$\ol\beta$ in closed form. 
  
\section{Average-Consensus from~$\mc{AB}m$}\label{s6}
In this section, we show that~$\mc{AB}m$ subsumes a novel average-consensus algorithm over strongly-connected directed graphs. To show this, we choose the objective functions as$$\wt{f}_i(\mb{x})=\tfrac{1}{2}\|\mb{x}-\bds{\upsilon}_i\|^2,\quad \forall i.$$ Clearly, the minimization of~$\wt{F}=\sum_{i=1}^{n} \wt{f}_i$ is now achieved at~$\mb{x}^*=\tfrac{1}{n} \sum_{i=1}^{n}\bds{\upsilon}_i$. The~$\mc{AB}m$ algorithm, Eq.~\eqref{ABmv}, thus naturally leads to the following average-consensus algorithm, termed as~$\mc{AB}m$-$\mc{C}$, with~$\nabla\mb{f}(\mb{x}_{k+1})-\nabla\mb{f}(\mb{x}_k)=\mb{x}_{k+1}-\mb{x}_k$; for the sake of simplicity, we choose~$\alpha_i=\alpha,\beta_i=\beta,\forall i$: 
\begin{align*}
\mb{x}_{k+1} &= (\mc{A}+\b I)\mb{x}_k  - \a\mb{y}_k - \b\mb{x}_{k-1}, \\
\mb{y}_{k+1} &=  (\mc{A} + \b I - I) \mb{x}_k + (\mc{B}-\a I)\mb{y}_k - \b\mb{x}_{k-1}.
\end{align*}
Its local implementation at each agent~$i$ is given by:
\begin{align*}
\mb{x}_{k+1}^i =& \sum_{j\in\mc{N}_i\setminus i}a_{ij}\mb{x}_k^j + (a_{ii}+\b)\mb{x}_k^i - \a\mb{y}^i_k - \b\mb{x}_{k-1}^i, \\
\mb{y}_{k+1}^i =&  \sum_{j\in\mc{N}_i\setminus i}a_{ij}\mb{x}_k^j + (a_{ii}+\b-1)\mb{x}_k^i\\&+ \sum_{j\in\mc{N}_i\setminus i}b_{ij}\mb{y}_k^j + (b_{ii}-\alpha)\mb{y}_k^i - \b\mb{x}_{k-1}^i,
\end{align*}
where~$\mb{x}^i_0=\bds\upsilon_i$ and~$\mb{y}_i^0 = 0,~\forall i$. 

From the analysis of~$\mc{AB}m$, an~$R$-linear convergence of~$\mc{AB}m$-$\mc{C}$ to the average of~$\bds{\upsilon}_i$'s is clear from Theorem~\ref{R}. It may be possible to make concrete rate statements by studying the spectral radius of the following system matrix:
\begin{equation}\label{scm}
\left[\begin{array}{c}
\mb{x}_{k+1}\\ 
\mb{y}_{k+1}\\
\mb{x}_{k}
\end{array}
\right]
= \left[\begin{array}{ccc}
\mc A+\b I & -\a I & -\b I\\
\mc A+\b I - I & \mc B-\a I & -\b I\\
I & 0 & 0\\
\end{array}
\right]
\left[\begin{array}{c}
\mb{x}_{k}\\ 
\mb{y}_{k}\\
\mb{x}_{k-1}
\end{array}
\right].
\end{equation}
However, this analysis is beyond the scope of this paper. We note that when~$\b=0$, the above equations still converge to the average of~$\bds{\upsilon}_i$'s according to Theorem~\ref{R}. What is surprising is that, with~$\beta=0$,~$\mc{AB}m$-$\mc{C}$ reduces to
\begin{align}\label{sc}
\left[\begin{array}{c}
\mb{x}_{k+1}\\ 
\mb{y}_{k+1}
\end{array}
\right]
= \left[\begin{array}{ccc}
\mc A & -\a I\\
\mc A - I & \mc B-\a I
\end{array}
\right]
\left[\begin{array}{c}
\mb{x}_{k}\\ 
\mb{y}_{k}
\end{array}
\right],
\end{align}
which is surplus consensus~\cite{ac_Cai1}, after a state transformation with~$\mbox{diag}\left(I, -I\right)$; in fact, any state  transformation of the form~$\mbox{diag}(I, \wt{I})$ applies here as long as~$\wt{I}$ is diagonal (to respect the graph topology) and invertible. More importantly, compared with surplus consensus~\cite{ac_Cai1},~$\mc{AB}m$-$\mc{C}$ uses information from the past iterations. This history information is in fact the momentum from a distributed optimization perspective, which may lead to accelerated convergence as we will numerically show in Section~\ref{s7}. 

Following this discussion, choosing the local functions as~$\wt f_i$'s in~\cite{AugDGM,harness}, or in ADD-OPT~\cite{add-opt,diging}, or in FROST~\cite{linear_row,FROST}, we get average-consensus with only DS, CS, or RS weights. The protocol that results directly from~$\mc{AB}$ is surplus consensus, while the one resulting directly from FROST was presented in~\cite{ac_row}. With the analysis provided in Section~\ref{s3}, we see that the algorithm in~\cite{ac_row} is in fact related to surplus consensus after a state transformation. Clearly, \textit{accelerated} average-consensus based exclusively on either row- or column-stochastic weights can be abstracted from the discussion herein, after adding a momentum term.

\section{Numerical Experiments}\label{s7}
We now provide numerical experiments to illustrate the theoretical findings described in this paper. To this aim, we use two different graphs: an undirected graph,~$\mc{G}_1$, and a directed graph,~$\mc{G}_2$. Both graphs have~$n=500$ agents and are generated using nearest neighbor rules and then we add less than~$0.05\%$ random links. The number of edges in all cases is less~$4\%$ of the total possible edges. Since the graphs are randomly generated across experiments, two sample graphs are shown in Fig.~\ref{graph}, without the self-edges and random links for visual clarity. We generate DS weights using the Laplacian method:~$W=I-\tfrac{1}{\max_i \deg_i+1} L$, where~$L$ is the graph Laplacian and~$\deg_i$ is the degree of node~$i$. Additionally, we generate RS and CS  weights with the uniform weighting strategy:~$a_{ij}=\tfrac{1}{|\mc{N}_j^{{\scriptsize \mbox{in}}}|}$ and~$b_{ij}=\tfrac{1}{|\mc{N}_j^{{\scriptsize \mbox{out}}}|},\forall i,j$. We note that both weighting strategies are applicable to undirected graphs, while only the uniform strategy can be used over directed graphs.
\begin{figure}[!ht]
\centering
\subfigure{\includegraphics[width=1.72in]{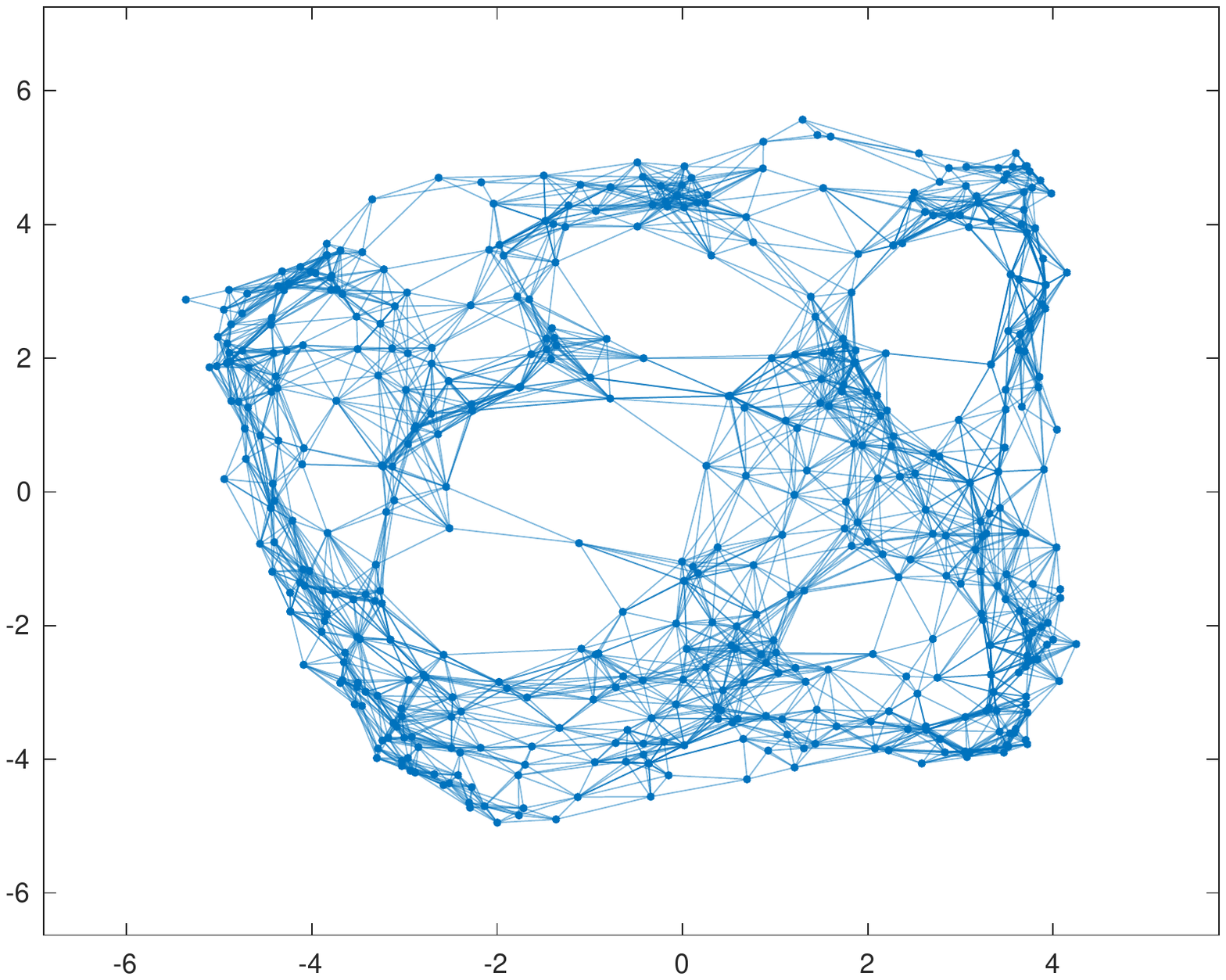}}
\subfigure{\includegraphics[width=1.72in]{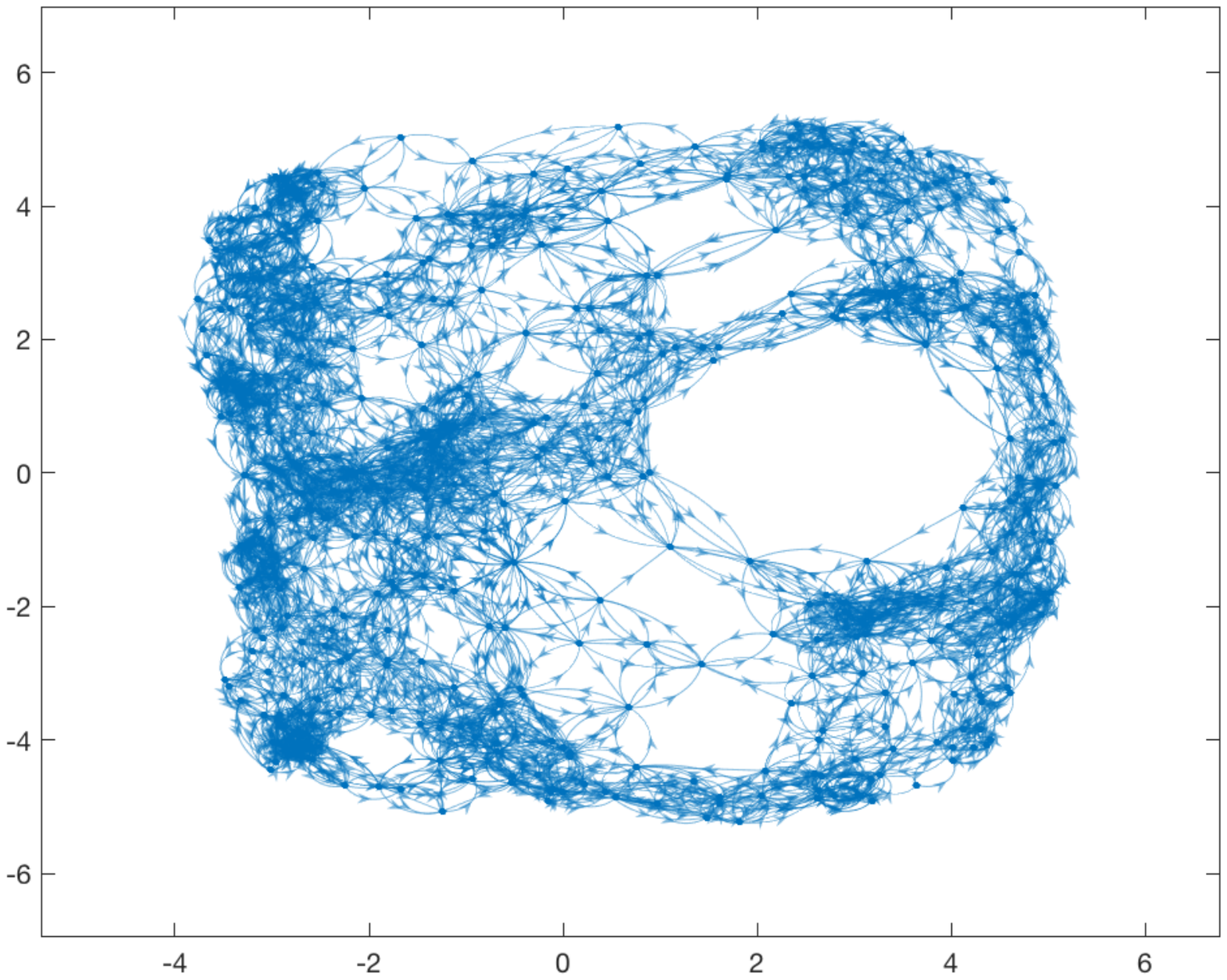}}
\caption{(Left) Undirected graph,~$\mc{G}_1$. (Right) Directed graph,~$\mc{G}_2$.}
\label{graph}
\end{figure}

\subsection{Logistic Regression}
We first consider distributed logistic regression: each agent~$i$ has access to~$m_i$ training data,~$(\mb{c}_{ij},y_{ij})\in\mathbb{R}^p\times\{-1,+1\}$, where~$\mb{c}_{ij}$ contains~$p$ features of the~$j$th training data at agent~$i$, and~$y_{ij}$ is the corresponding binary label. The agents cooperatively minimize~$F=\sum_{i=1}^nf_i(\mb{b},c)$, to find~$\mb{b}\in\mbb{R}^p,c\in\mbb{R}$, with each private loss function being

{\small\begin{equation*}
f_i(\mb{b},c)=\sum_{j=1}^{m_i}\ln\left[1+\exp\left(-\left(\mb{b}^\top\mb{c}_{ij}+c\right)y_{ij}\right)\right]+\frac{\lambda}{2}\|\mb{b}\|_2^2,
\end{equation*}}where~$\frac{\lambda}{2}\|\mb{b}\|_2^2$ is a regularization term used to prevent over-fitting of the data. The feature vectors,~$\mb{c}_{ij}$'s, are  randomly generated from a Gaussian distribution with zero mean and the binary labels are randomly generated from a Bernoulli distribution. We plot the average of residuals at each agent,~$\frac{1}{n}\sum_{i=1}^{n}\|\mb{x}_i(k)-\mb{x}^*\|_2$, and first compare the performance of the following over undirected graphs in Fig.~\ref{log_re} (Left):
\begin{inparaenum}[(i)]
\item $\mc{AB}m$ with RS and CS weights;
\item $\mc{AB}m$ with DS weights;
\item distributed optimization based on gradient tracking from~\cite{AugDGM,harness,diging}, with DS weights;
\item EXTRA from~\cite{EXTRA};
and,
\item centralized gradient descent.
\end{inparaenum} 
\begin{figure}[!h]
\centering
\subfigure{\includegraphics[width=1.72in]{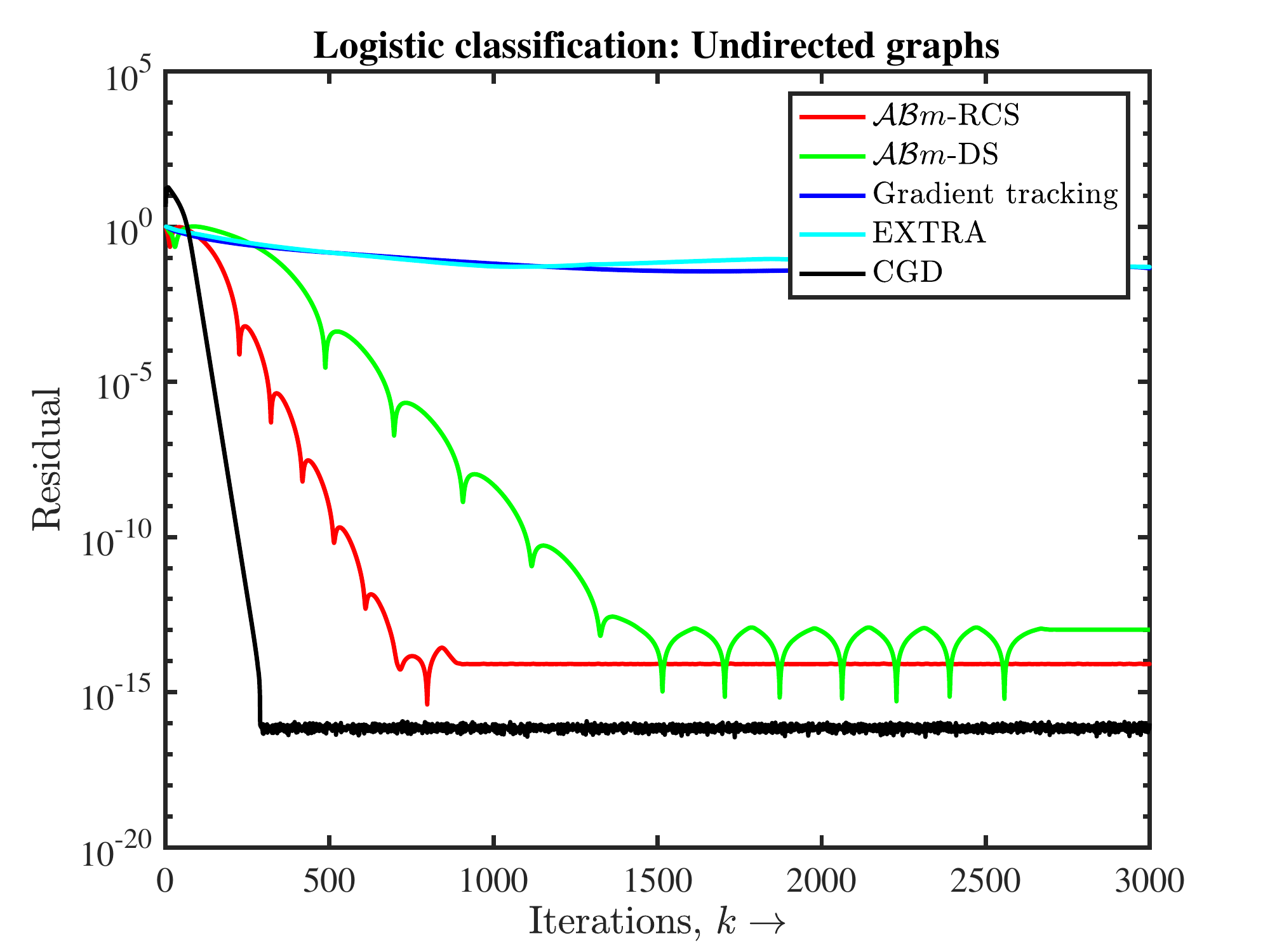}}
\subfigure{\includegraphics[width=1.72in]{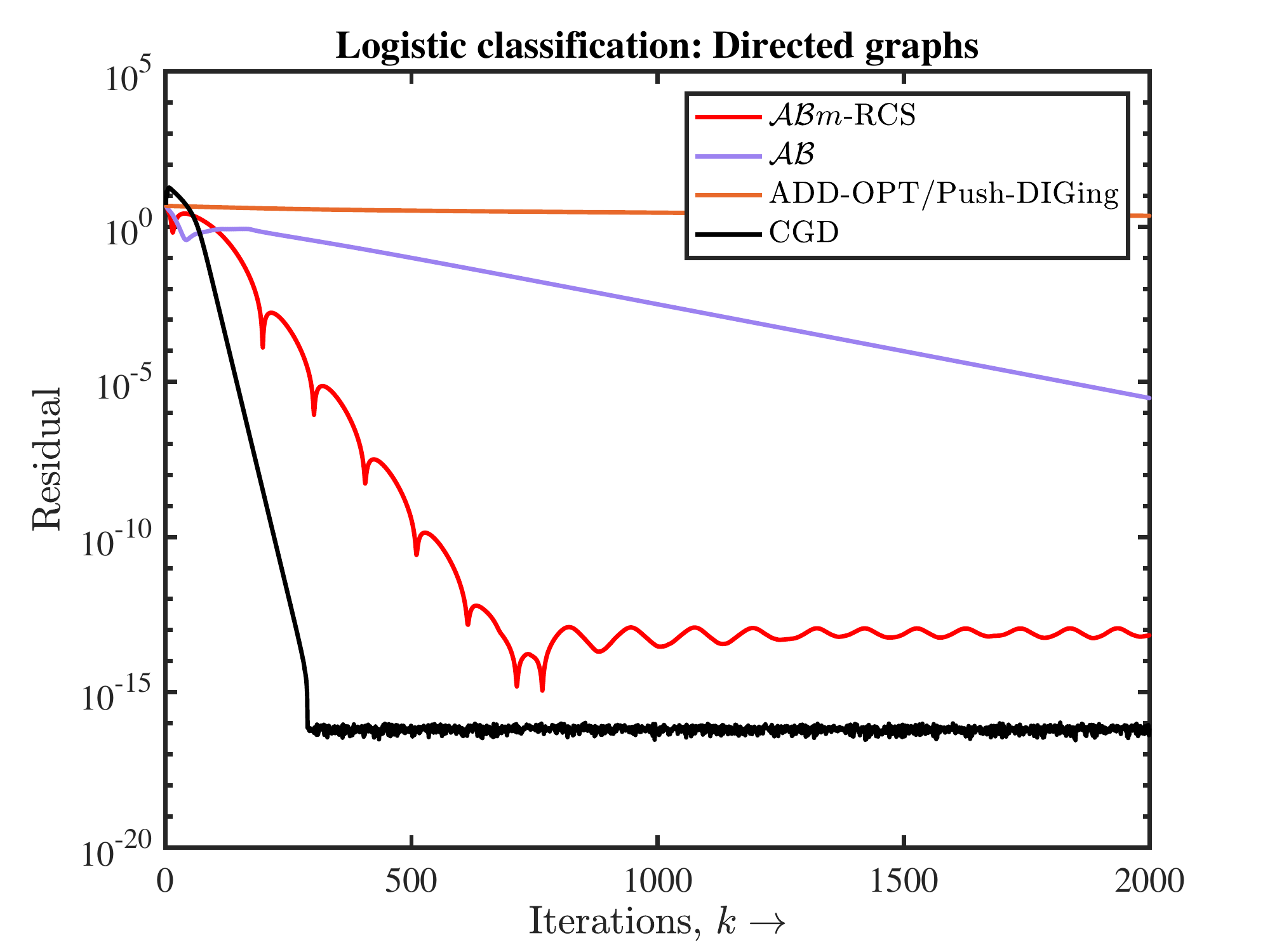}}
\caption{Logistic regression over undirected (Left) and directed graph (Right). }
\label{log_re}
\end{figure}

Next, we compare the performance similarly over \textit{directed graphs} in Fig.~\ref{log_re} (Right). Here, the algorithms with doubly-stochastic weights~\cite{EXTRA,AugDGM,harness,diging} are not applicable, and instead we compare~$\mc{AB}m$ with~$\mc{AB}$~\cite{AB}, ADD-OPT/Push-DIGing~\cite{add-opt,diging}, and centralized gradient descent. The weight matrices are chosen as we discussed before and the algorithm parameters are hand-tuned for best performance (except for gradient descent where the optimal step-size is given by~$\alpha=\tfrac{2}{\mu+l}$). We note that momentum improves the convergence when compared to applicable algorithms without momentum, while ADD-OPT/Push-DIGing are much slower because of the eigenvector estimation, see Section~\ref{s3} for details.

\begin{figure*}[!h]
\centering
\subfigure{\includegraphics[width=2.35in]{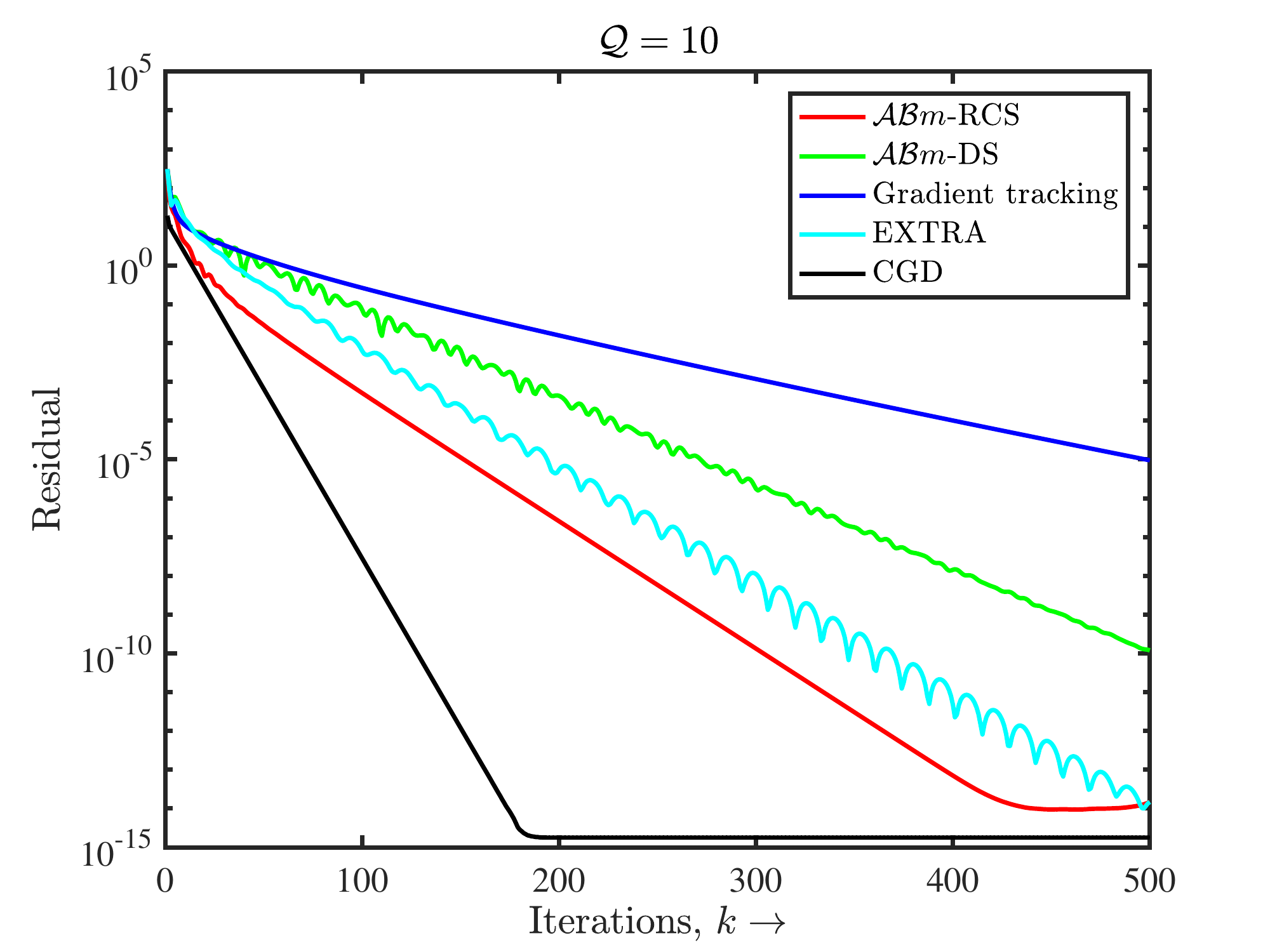}}
\subfigure{\includegraphics[width=2.35in]{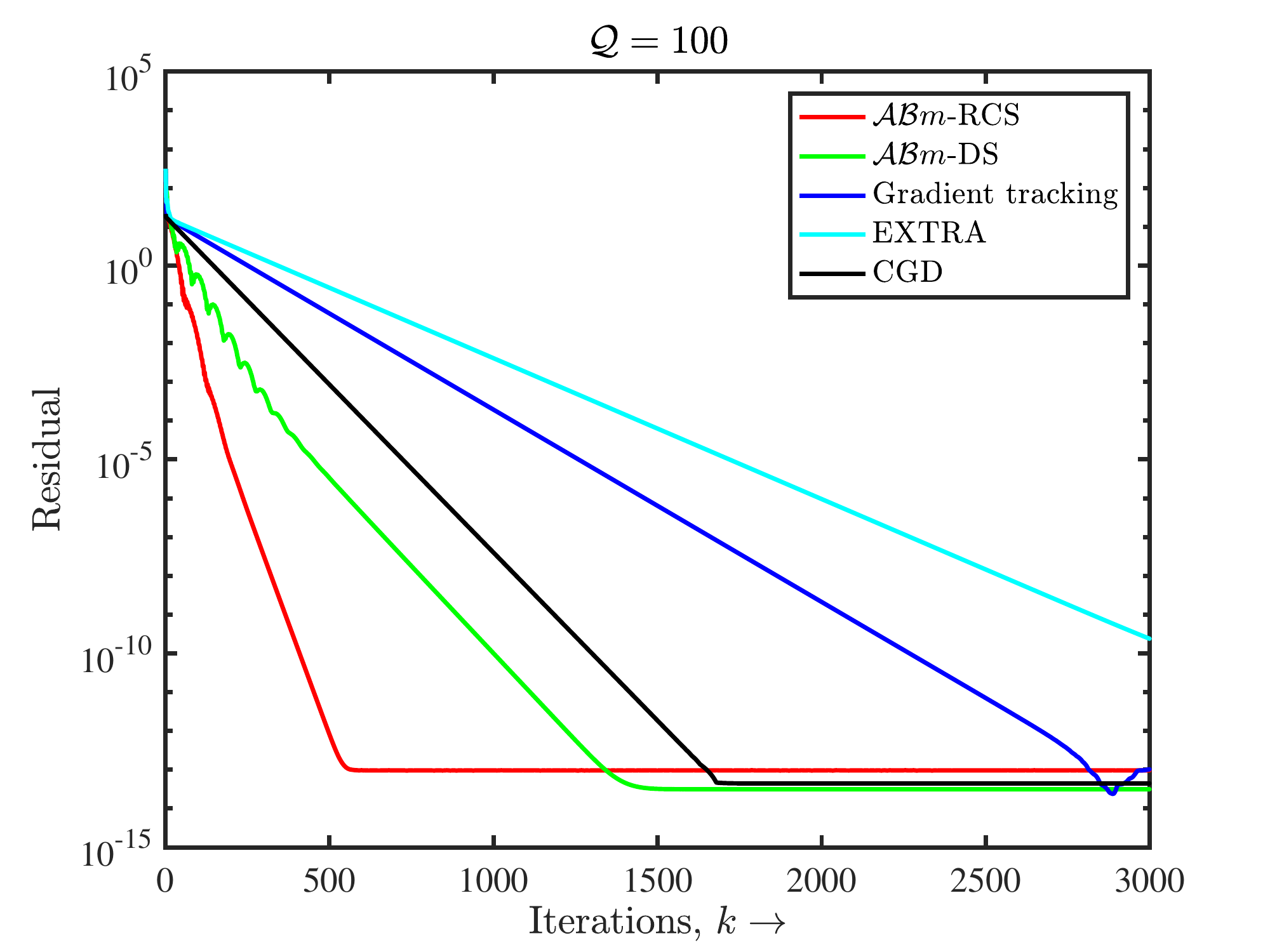}}
\subfigure{\includegraphics[width=2.35in]{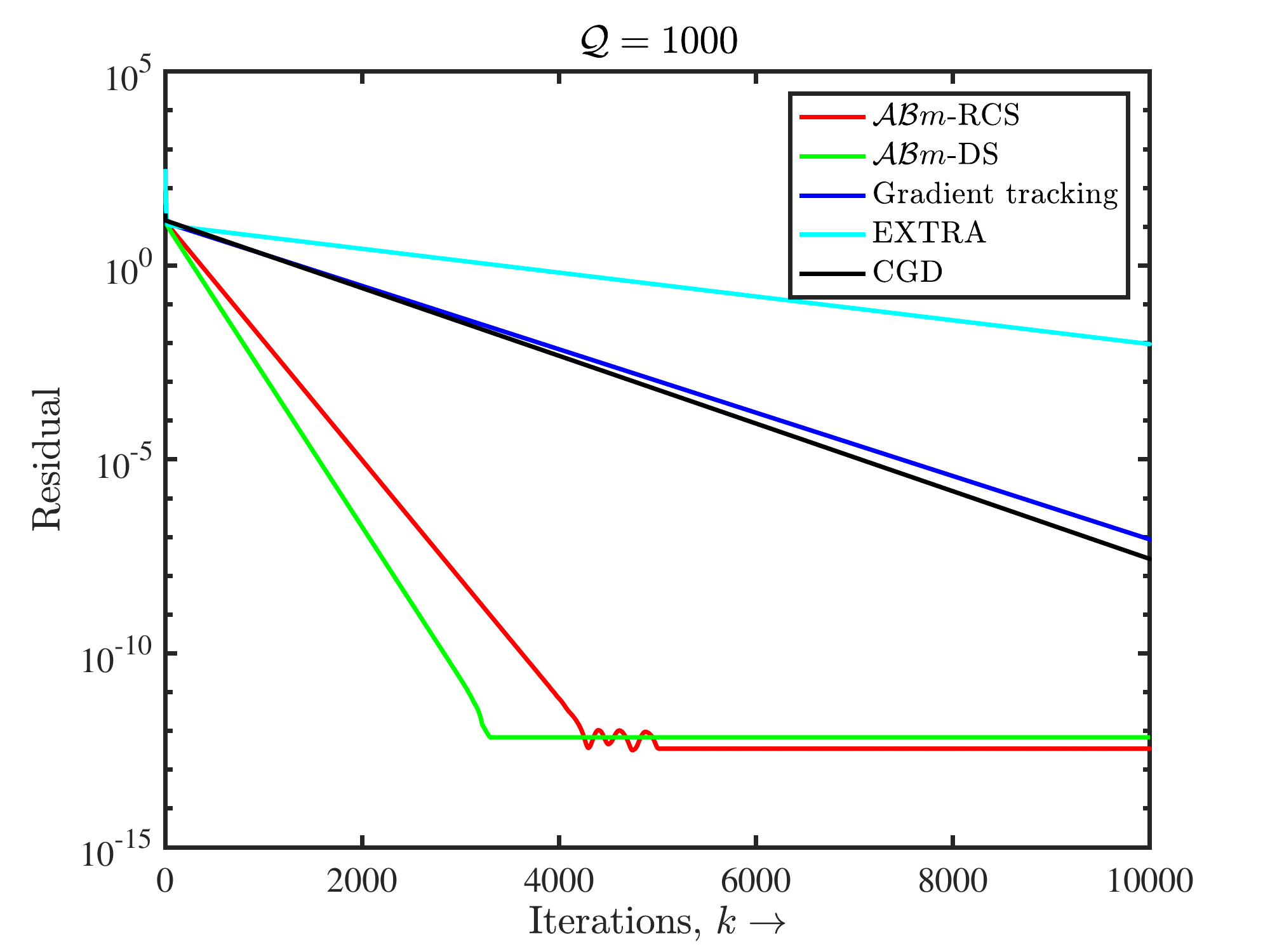}}
\caption{Performance comparison over \textit{undirected graph},~$\mc{G}_1$, as a function of the condition numbers.}
\label{Qun}
\end{figure*}
\begin{figure*}[!h]
\centering
\subfigure{\includegraphics[width=2.35in]{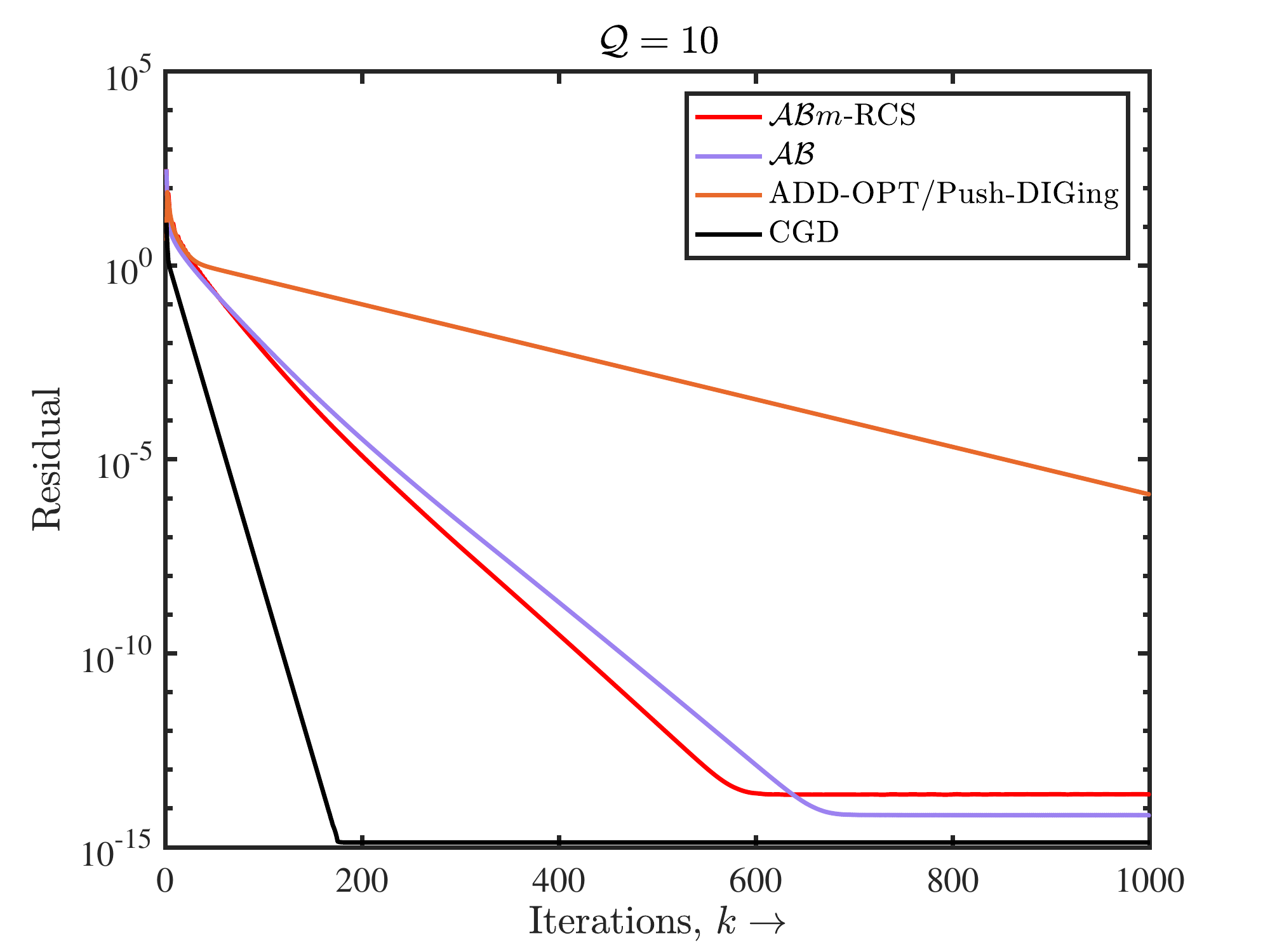}}
\subfigure{\includegraphics[width=2.35in]{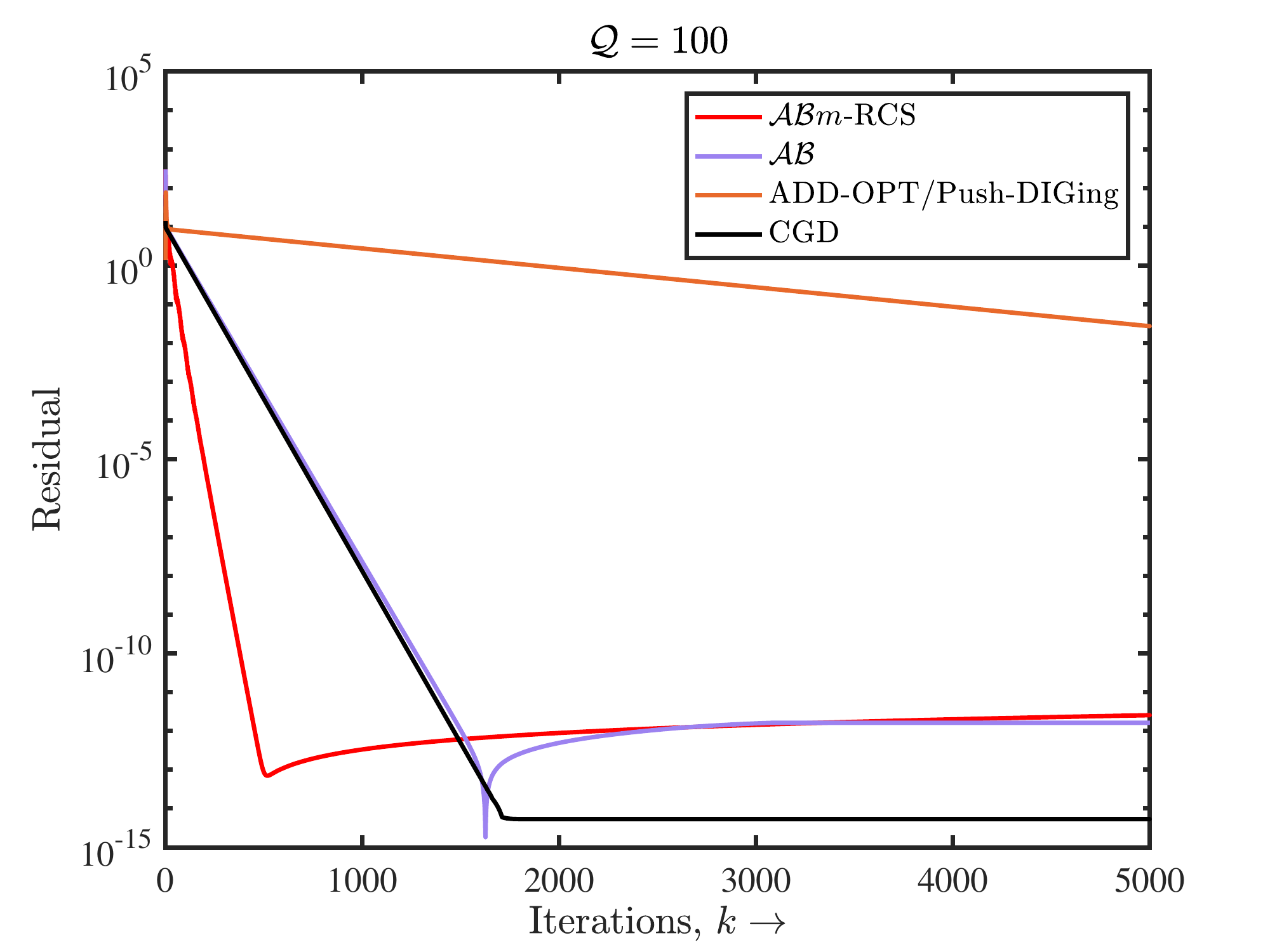}}
\subfigure{\includegraphics[width=2.35in]{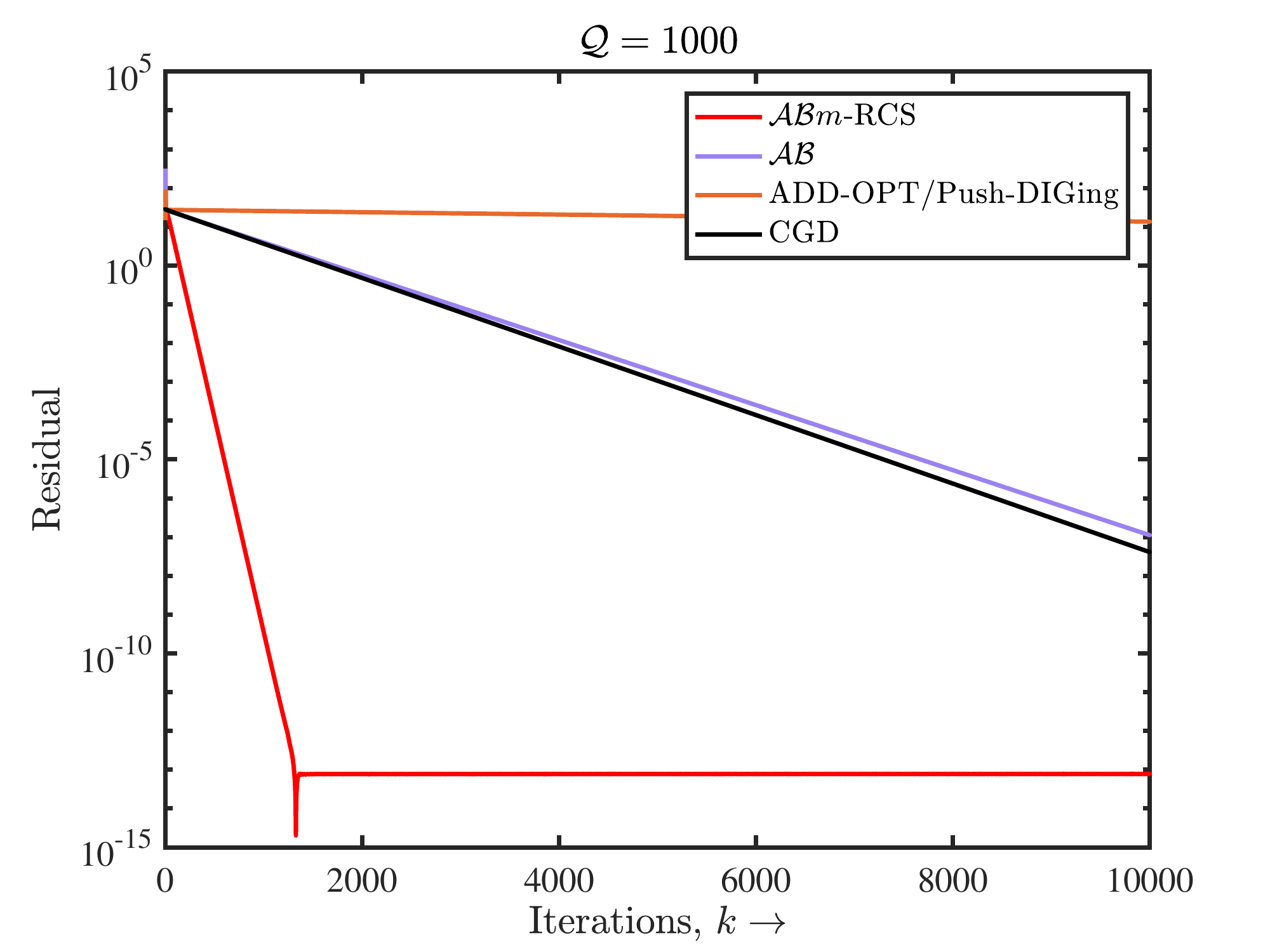}}
\caption{Performance comparison over \textit{directed graph},~$\mc{G}_2$, as a function of the condition numbers.}
\label{Qdi}
\end{figure*}

\subsection{Distributed Quadratic Programming}
We now compare the performance of the aforementioned algorithms over different condition numbers of the global objective function, chosen to be quadratic, i.e.,~$F=\sum_i \mb{x}^\top Q_i\mb{x} + \mb{b}_i^\top\mb{x}$, where~$Q_i\in\mbb{R}^{p\times p}$ is diagonal and positive-definite. The condition number~$\mc{Q}$ of~$F$ is given by the ratio of the largest to the smallest eigenvalue of~$Q\triangleq\sum_{i=1}^{n}Q_i$. We first provide the performance comparison over \textit{undirected graphs} in Fig.~\ref{Qun}, and then provide the results over \textit{directed graphs} in Fig.~\ref{Qdi}. In all of these experiments, we have hand-tuned the algorithm parameters for best performance. 

For small condition numbers, we note that  gradient descent is quite fast and the distributed algorithms suffer from a relatively slower fusion over the graphs. Recall that the optimal convergence rate of gradient decent is~$\mc{O}((\tfrac{\mc{Q}-1}{\mc{Q}+1})^k)$. When the condition number is large, gradient descent is quite conservative allowing fusion to catch up. Finally, we note that~$\mc{AB}m$, with momentum, outperforms the centralized gradient descent when the condition number is large. This observation is~consistent with the existing literature, see e.g.,~\cite{polyak1964some,polyak1987introduction,IAGM,lessard2016analysis,drori2014performance}.

\subsection{$\mc{AB}m$ and Average-Consensus}
We now provide numerical analysis and simulations to show that~$\mc{AB}m$-$\mc{C}$, in Eq.~\eqref{scm}, possibly achieves acceleration when compared with surplus-consensus, in Eq.~\eqref{sc}. To explain our choice of~$\alpha$ and~$\b$, we first note that the power limit of the system matrix in Eq.~\eqref{sc}, denoted as~$\mc{H}$, is~\cite{ac_Cai1}:
\begin{align*}
\lim_{k\rightarrow\infty}\mc{H}^k = \mc{H}_\infty =
\left[\begin{array}{ccc}
\mc{W}_\infty & -\mc{W}_\infty\\
0_{np\times np} & 0_{np\times np} 
\end{array}
\right],
\end{align*}
where~$\mc{W}_\infty=(\frac{1}{n}\mb{1}_n\mb{1}_n^\top)\otimes I_p$. It is straightforward to show that~$\mc{H}^k-\mc{H_\infty} = \left(\mc{H}-\mc{H}_\infty\right)^k.$ Similarly, for the augmented system matrix,~$\wt{\mc{H}}$, in Eq.~\eqref{scm}, we observe that

\vspace{-0.2cm}
{\small\begin{align*}
\lim_{k\rightarrow\infty}\wt{\mc{H}}^k = \wt{\mc{H}}_\infty =
\left[\begin{array}{ccc}
\mc{W}_\infty & -\mc{W}_\infty & 0_{np\times np}\\
0_{np\times np} & 0_{np\times np} & 0_{np\times np}\\
\mc{W}_\infty & -\mc{W}_\infty & 0_{np\times np}
\end{array}
\right],
\end{align*}}and it can be verified that~$\mc{\wt{H}}^k-\mc{\wt{H}_\infty} = (\mc{\wt{H}}-\mc{\wt{H}}_\infty)^k.$ We therefore use grid search~\cite{nesterov2013introductory} to choose the optimal~$\alpha^*$ in~$\mc{H}$ and the optimal~$\wt{\alpha}^*$ and~$\wt{\beta}^*$ in~$\mc{\wt{H}}$, which respectively minimize~$\rho(\mc{H}-\mc{H}_\infty)$ and~$\rho(\mc{\wt{H}}-\mc{\wt{H}}_\infty)$. Numerically, we observe  that it may be possible for the minimum of~$\rho(\mc{\wt{H}}-\mc{\wt{H}}_\infty)$ to be smaller than that of~$\rho\left(\mc{H}-\mc{H}_\infty\right)$. The convergence speed comparison between~$\mc{AB}m$-$\mc{C}$ and surplus consensus~\cite{ac_Cai1} is shown in Fig~\ref{smc} over a directed graph,~$\mc{G}_2$.
\begin{figure}[!ht]
\centering
\subfigure{\includegraphics[width=2.35in]{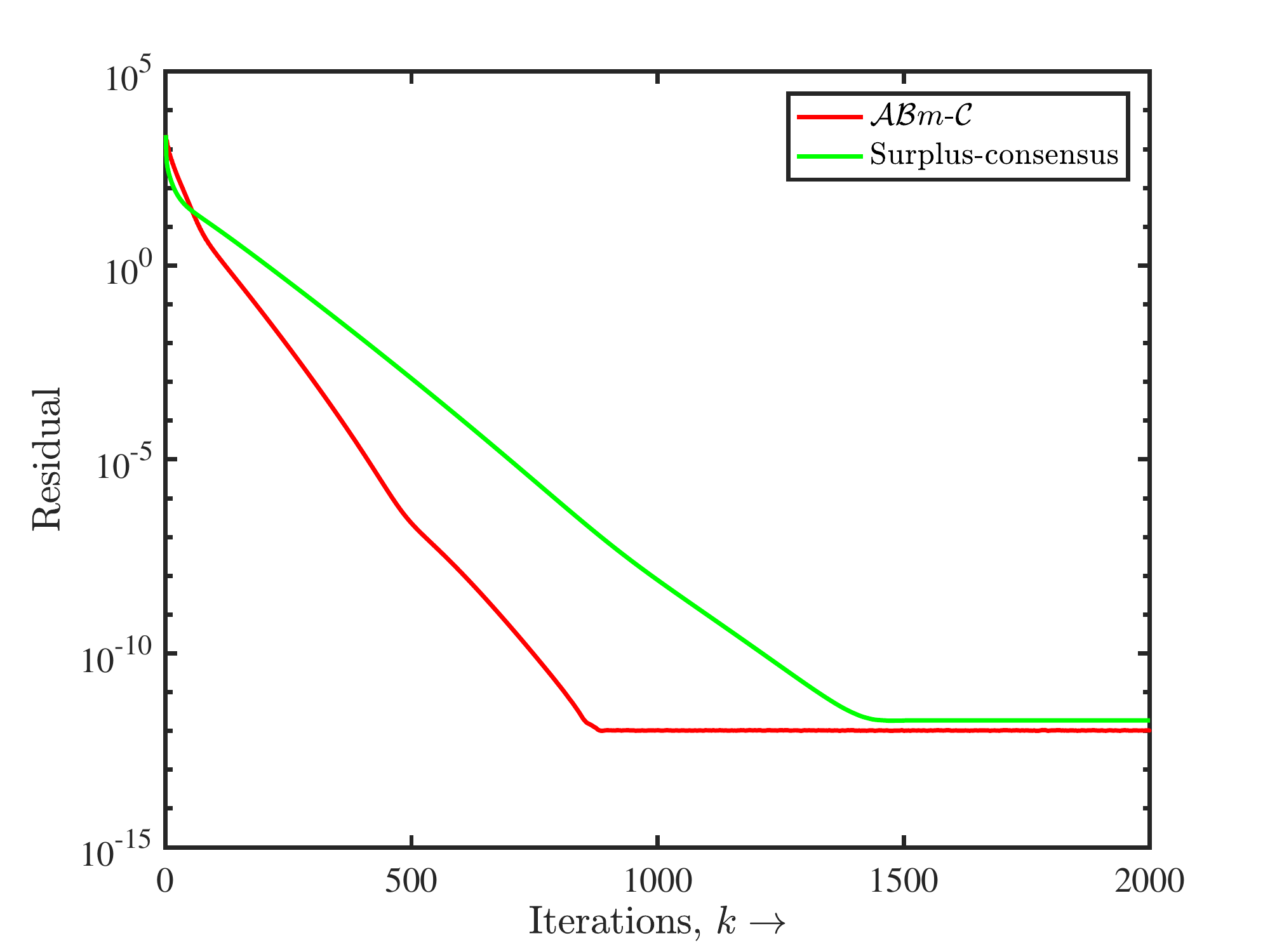}}
\caption{Average-consensus via~$\mc{AB}m$-$\mc{C}$ (with momentum) and surplus consensus (without momentum) implemented over a directed graph.}
\label{smc}
\end{figure}

 \section{Conclusions}\label{s8}
In this paper, we provide a framework for distributed optimization that removes the need for doubly-stochastic weights and thus is naturally applicable to both undirected and directed graphs. Using a state transformation based on the~non-$\mb{1}_n$ eigenvector, we show that the underlying algorithm,~$\mc{AB}$, based on a simultaneous application of both RS and CS weights, lies at the heart of several algorithms studied earlier that rely on eigenvector estimation when using only CS (or only RS) weights. We then propose the distributed heavy-ball method, termed as~$\mc{AB}m$, that combines~$\mc{AB}$ with a heavy-ball (type) momentum term. To the best of our knowledge, this paper is the first to use a momentum term based on the heavy-ball method in distributed optimization. We show that~$\mc{AB}m$ subsumes a novel average-consensus algorithm as a special case that unifies earlier attempts over directed graphs, with potential acceleration due to the momentum term.

{
	%\footnotesize
	\small
	\bibliographystyle{IEEEbib}
	\bibliography{sample}
}

\begin{IEEEbiography}[{\includegraphics[width=1in,height=1.25in,clip,keepaspectratio]{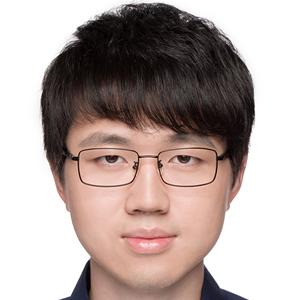}}]{Ran Xin} received his B.S. degree in Mathematics and Applied Mathematics from Xiamen University, China, in 2016, and M.S. degree in Electrical and Computer Engineering from Tufts University in 2018. Currently, he is a Ph.D. student in the Electrical and Computer Engineering department at Tufts University. His research interests include optimization theory and algorithms.
\end{IEEEbiography}

\begin{IEEEbiography}[{\includegraphics[width=1in,height=1.25in,clip,keepaspectratio]{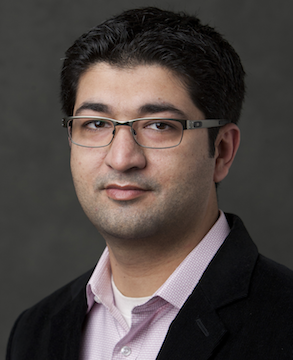}}]{Usman A. Khan} has been an Associate Professor of Electrical and Computer Engineering (ECE) at Tufts University, Medford, MA, USA, since September 2017, where he is the Director of \textit{Signal Processing and Robotic Networks} laboratory. His research interests include statistical signal processing, network science, and distributed optimization over autonomous multi-agent systems. He has published extensively in these topics with more than 75 articles in journals and conference proceedings and holds multiple patents. Recognition of his work includes the prestigious National Science Foundation (NSF) Career award, several NSF REU awards, an IEEE journal cover, three best student paper awards in IEEE conferences, and several news articles. Dr. Khan joined Tufts as an Assistant Professor in 2011 and held a Visiting Professor position at KTH, Sweden, in Spring 2015. Prior to joining Tufts, he was a postdoc in the GRASP lab at the University of Pennsylvania. He received his B.S. degree in 2002 from University of Engineering and Technology, Pakistan, M.S. degree in 2004 from University of Wisconsin-Madison, USA, and Ph.D. degree in 2009 from Carnegie Mellon University, USA, all in ECE. Dr. Khan is an IEEE senior member and has been an associate member of the Sensor Array and Multichannel Technical Committee with the IEEE Signal Processing Society since 2010. He is an elected member of the IEEE Big Data special interest group and has served on the IEEE Young Professionals Committee and on IEEE Technical Activities Board. He was an editor of the IEEE Transactions on Smart Grid from 2014 to 2017, and is currently an associate editor of the IEEE Control System Letters. He has served on the Technical Program Committees of several IEEE conferences and has organized and chaired several IEEE workshops and sessions. 
\end{IEEEbiography}

\end{document}